\newcommand{\ppsi}{\varphi}
\newcommand{\muy}{\mu^y}
\newcommand{\ud}{\mathrm{d}}
\newcommand{\e}{\mathrm{e}}
\newcommand{\R}{\mathbb{R}}
\newcommand{\Z}{\mathbb{Z}}
\newcommand{\TT}{\mathbb{T}}
\newcommand{\cA}{\mathcal{A}}
\newcommand{\cdiv}{\nabla\cdot}
\newcommand{\lt}{\left}
\newcommand{\rt}{\right}
\newcommand{\paro}{\partial D}
\newcommand{\kk}{\tiny{\rm K}}
\newcommand{\cN}{\mathcal{N}}
\newcommand{\bbR}{\mathbb{R}}
\newcommand{\bbT}{\mathbb{T}}
\newcommand{\cG}{\mathcal{G}}
\newcommand{\bbP}{\mathbb{P}}
\newcommand{\bbE}{\mathbb{E}}
\newcommand{\eps}{{\epsilon}}
\newcommand{\dhh}{d_{\mbox {\tiny{\rm Hell}}}}
\newcommand{\la}{\langle}
\newcommand{\ra}{\rangle}
\newtheorem{asp}[theorem]{Assumption}
\begin{document}

\title{Uncertainty quantification and weak approximation
of an elliptic inverse problem}

\author{M. Dashti,\ A. M. Stuart\thanks{Mathematics Institute, University of Warwick, Coventry CV4 7AL, UK}} 

\maketitle

\begin{abstract}

We consider the inverse problem of determining the
permeability from the pressure in a Darcy model of
flow in a porous medium. Mathematically the problem
is to find the diffusion coefficient for a linear 
uniformly elliptic partial differential equation
in divergence form, in a bounded domain in dimension 
$d \le 3$, from measurements of the solution
in the interior. 

We adopt a Bayesian approach to the problem. We place
a prior random field measure on the log permeability,
specified through the Karhunen-Lo\`eve expansion of its draws.
We consider Gaussian measures constructed this way,
and 
study the regularity of functions drawn from them.
We also study the Lipschitz properties of the observation
operator mapping the log permeability to the observations.
Combining these regularity 
and continuity estimates, we show that the posterior measure 
is well-defined on a suitable Banach space.  
Furthermore the posterior measure is 
shown to be Lipschitz with respect to the data 
in the Hellinger metric, giving rise 
to a form of well-posedness of the inverse problem.
Determining the posterior measure, given the data,
solves the problem of uncertainty quantification
for this inverse problem.

In practice the posterior measure must be approximated
in a finite dimensional space.
We quantify the errors incurred by employing
a truncated Karhunen-Lo\`eve expansion to represent
this meausure.  In particular we study weak convergence 
of a general class of locally Lipschitz functions
of the log permeability, and apply this general
theory to estimate errors in 
the posterior mean of the pressure and the
pressure covariance, under refinement 
of the finite dimensional Karhunen-Lo\`eve truncation.

\end{abstract}

\section{Introduction}\label{sec:intro}

There is a growing interest in uncertainty quantification for differential equations
in which the input data is uncertain. In the context of elliptic partial differential equations
much of this work has concentrated on the problem of groundwater flow in which
uncertainty enters the diffusion coefficient in a divergence form elliptic partial differential equation.
Here there has been substantial work in the numerical analysis community devoted 
to quantifying the error in the solution of the problem in the case where the diffusion 
coefficient is a random field specified through a Karhunen-Lo\`eve or polynomial
chaos expansion which is truncated 
\cite{BNT07,BTZ04,BSw09,CDS10,Cha10,FSwT05,Mat08,MatB99,NTW08a,NTW08b,ST06,ZhL04}.
However in practice the unknown diffusion coefficient is often conditioned 
by observational data, leading to an inverse problem \cite{McT96}.
This gives rise
to a far more complicated measure on the diffusion 
coefficient.
The purpose of this paper is to study this inverse problem and, in particular, 
the effect of approximating the underlying probability measure via
a finite, but large, set of real valued random variables.
Much of the existing numerical analysis concerning 
groundwater flow with random permeability 
requires uniform upper and lower bounds
over the probability space,  and hence
excludes the log normal permeability distributions
widely used in applications. An exception is the 
recent paper \cite{Cha10} in which the log normal
case is studied. 
For the inverse problem we study here we also
use log normal priors which are attractive from an 
inverse modeling perspective
precisley because no prior bounds on the
permeability may be known.
A key tool when working with
log normal distributions, and hence Gaussian measures, 
is the Fernique theorem
which faciltates functional integration of a wide
class of functions, including the exponential of 
quadratics, against Gaussian measures \cite{DaZa92}. 
The paper \cite{Cha10} exemplifies the key role of
the Fernique theorem and it will also be used
extensively in our developments of the inverse problem.

We consider the elliptic equation 
\begin{equation}\label{eq:epde}
\begin{array}{cl}
-\nabla\cdot \lt(\e^{u}\nabla p\rt)=f+\cdiv g,&x\in D,\\
\qquad p=\phi,& x\in\partial D,
\end{array}
\end{equation}
with $D$ an open, bounded and connected subset of  $\R^d$, $d\le 3$, 
$p$, $u$, $f$ and $\phi$ scalar functions and $g$ a vector function on $D$. 
Given any $u\in L^\infty(D)$ we define $\lambda(u)$ and $\Lambda(u)$
by 
$$
\lambda(u)=\mathrm{ess}\inf_{x\in D}\e^{u(x)},\qquad
\Lambda(u)=\mathrm{ess}\sup_{x\in D}\e^{u(x)}.
$$
Where it causes no confusion we will simply write $\lambda$ or $\Lambda$.
Equation (\ref{eq:epde}) arises as a model for flow in a porous medium with $p$ the pressure
(or the head) and $\e^u$ the permeability (or the transmissivity); the velocity $v$
is given by the formula $v\propto -\e^u\nabla p$. 

Consider making noisy observations of a set of
linear functionals $l_j$
of the pressure field $p$, so that $l_j:p\mapsto l_j(p)\in\R$. 
We write the observations as
\begin{equation}
y_j=l_j(p)+\eta_j,\quad j=1,\cdots,K.
\label{eq:obs}
\end{equation}
We assume, for simplicity,
that $\eta=\{\eta_j\}_{j=1}^K$ is a mean zero
Gaussian observational noise with covariance $\Gamma$. 
In this paper we consider $l_j$ to be either:
\begin{itemize}
\item[a)] pointwise evaluation of $p$ at a point $x_j\in D$ 
(assuming enough regularity for $f$, $g$ and $\phi$ so that this makes sense almost everywhere in $D$); or

\item[b)] $l_j:H^1(D)\to\R$, a functional on 
$H^1(D)$ (again assuming enough regularity 
for $f$, $g$ and $\phi$ so that $p\in H^1(D)$).

\end{itemize}

Our objective is to determine $u$ from $y=\{y_j\}_{j=1}^K \in \bbR^K$.
 We adopt a probabilistic approach 
which we now outline.
In the sequel we derive conditions under
which we may view $l_j(p)$ as a function of $u$. 
Then, concatenating the data, we have
$$y={\cal G}(u)+\eta,$$
with
\begin{equation}
\label{eq:bvpa}
\cG(u)=\bigl(l_1(p),\cdots,l_{\kk}(p)\bigr)^T.
\end{equation}
Here the {\it observation operator} $\cG$ maps $X$ into $\bbR^K$
where $X$ is a Banach space which we specify below in
various scenarios and
is determined by the forward model. 
From the properties of $\eta$ we see that the likelihood of the data $y$ given $u$ is
$$\bbP\,(y\mid u) \propto 
\exp \Bigl(-\frac{1}{2}\Big|\Gamma^{-\frac12}\bigl(y-{\cal G}(u)\bigr)\Big|^2\Bigr)$$ %
where $|\cdot|$ is the standard Euclidean norm. 
Let $\bbP(u)$ denote a prior distribution on the function $u$. 
If $u$ were finite dimensional, the posterior distribution, by Bayes' rule, would be given by
$$
\bbP(u|y)\propto \bbP(y|u)\,\bbP(u).
$$
For infinite dimensional spaces however there is no density with respect to the Lebesgue measure. In this context Bayes rule should be interpreted as providing the
Radon-Nikodym derivative between the posterior measure $\mu^y(\ud u)=\bbP(\ud u|y)$ 
and the prior measure $\mu_0(\ud u)=\bbP(\ud u)$:
\begin{equation}\label{eq:radon2}
\frac{d\mu^y}{d\mu_0}(u) \propto 
\exp \Bigl(-\frac{1}{2}\Big|\Gamma^{-\frac12}\bigl(y-{\cal G}(u)\bigr)\Big|^2\Bigr).
\end{equation}
The problem of making sense of Bayes rule for probability measures on function spaces
with a Gaussian prior is addressed in \cite{CDRS09, CDaS10, St10}.
In Section \ref{sec:infinte-Bayes} 
we recall these results, and then, in Subsection \ref{sec:approx}, 
state and prove a new result concerning
stability properties of the posterior measure
with respect to finite dimensional approximation of the prior. 

In Section \ref{sec:estimates} we show that the observation operator $\cG$ 
of the elliptic problem described above satisfies
boundedness and Lipschitz continuity conditions 
for appropriate choices of the Banach space $X$. 
In Section \ref{sec:post} we combine the results of
the preceding two sections to show that formula
(\ref{eq:radon2}) holds for the posterior measure, and
to study its approximation with
respect to finite dimensional specification of
the prior and posterior.
Section \ref{sec:conclu} contains some concluding remarks.

\section{Bayesian approach to inverse problems for functions}
\label{sec:infinte-Bayes}

In this section we recall various theoretical results 
related to the development of Bayesian statistics on 
function space. We also state and prove a new result on the weak
approximation of the posterior using finite
dimensional truncation of the Karhunen-Lo\`eve expansion.
 We assume that we are given two
Banach spaces $X$ and $Y$, a function $\Phi:X \times Y \to \bbR$
and a probibility measure $\mu_0$ supported on $X$.
Consider the putative Radon-Nikodym derivative
\begin{subequations}
\label{eq:radon}
\begin{equation}
\label{eq:radon1}
\frac{d\mu^y}{d\mu_0}(u)= \frac{1}{Z(y)}\exp \bigl(-\Phi(u;y)\bigr),
\end{equation} 
\begin{equation}
Z(y)=\int_X \exp(-\Phi(u;y))\,\mu_0(\ud u).
\label{eq:norma}
\end{equation}
\end{subequations}
Our aim is to find conditions on $\Phi$ and $\mu_0$ under
which $\mu^y$ is a well-defined probability measure on $X$, 
which is continuous in the data $y$, and to describe
an approximation result for $\mu^y$ with respect to
approximation of $\Phi$.
Remarkably these results may all be proved simply by
establishing properties of the operator $\Phi$ and
its approximation on $X$,
and then choosing the prior Gaussian measure so that
$\mu_0(X)=1$.
This clearly separates the analytic and probabilistic aspects 
of the Bayesian formulation of inverse problems for functions.
The results of this section are independent of the specific inverse problem 
described in Section \ref{sec:intro} and have wide applicability.
Note, however, that \eqref{eq:radon2} is a particular case
of the general set-up of \eqref{eq:radon}, with
$Y=\bbR^K$. But the level of generality we adopt allows us to work
with infinite dimensional data (functions) and/or non-Gaussian
observational error $\eta$. 
In particular 
if the data $y=\cG(u)+\eta$
where $\cG:X\to Y$ is the observation operator, $Y$ is
a Hilbert space and $\eta$ 
is a mean zero random field on $Y$ 
with Cameron-Martin space $\big( E,\la\cdot,\cdot\ra_E,\|\cdot\|_{E}\big)$
then we define $\Phi$ as 
\begin{align*}
\Phi(u;y)
&=\frac12\|y-\cG(u)\|_{E}^2-\frac12\|y\|_{E}^2\\
&=\frac{1}{2}\|\cG(u)\|_E^2-\la y,\cG(u)\ra_E.
\end{align*}
On the other hand if $Y=\bbR^K$ and $\eta$ has Lebesgue
density $\rho$, then  we define $\Phi$ by
the identity $\exp(-\Phi(u;y))=\rho(y-\cG(u)).$
Note that these two definitions agree, up to
an additive constant depending only on $y$, when $\eta$ is
Gaussian and $Y$ is finite dimensional; such a constant
simply amounts to adjusting the normalization $Z(y).$
The subtraction of the term $\frac12\|y\|_{E}^2$ in the 
infinite dimensional data setting is required to make
sure that $\Phi(\cdot;y)$ is almost surely finite with respect 
to $\eta$ \cite{St10}.
For simplicity we work in the case where $X$
comprises periodic functions on the $d-$dimensional
torus $\bbT^d$; generalizations are possible.


\subsection{Well-defined and well-posed Bayesian inverse problems}

In \cite{CDRS09, CDaS10, St10}, it is shown that some appropriate properties of the
log likelihood $\Phi$ together with an appropriate choice of 
a Guassian prior measure implies the existence of
a well-posed Bayesian inverse problem. 
Here, we recall these results. To this end
we assume the following conditions on $\Phi$:

\begin{asp} \label{asp1}
Let $X$ and $Y$ be Banach spaces.
The function $\Phi:X\times Y\to\R$ satisfies:
\begin{itemize}
\item[(i)] for every $\epsilon>0$ and $r>0$ there is $M=M(\epsilon,r)\in \R$, such that for all $u\in X$, and for all $y\in Y$ such that $\|y\|_Y<r,$
\begin{equation*}
\Phi(u,y) \geq M -\epsilon\|u\|^2_{X};
\end{equation*}
\item[(ii)] for every $r>0$ there exists $K=K(r)>0$ such that for all $u\in X$, $y\in Y$ with $\max\{\|u\|_{X},\|y\|_Y\}<r$
\begin{equation*}
\Phi(u,y) \leq K;
\end{equation*}
\item[(iii)] for every $r>0$ there exists $L=L(r)>0$ such that for all $u_1,u_2\in X$ and $u\in Y$ with $\max\{\|u_1\|_{X},\|u_2\|_{X},\|y\|_Y\}<r$
\begin{equation*}
|\Phi(u_1,y)-\Phi(u_2,y)| \leq L\|u_1-u_2\|_{X};
\end{equation*}
\item[(iv)] 
for every $\epsilon>0$ and $r>0$, there is $C=C(\epsilon,r)\in\R$
 such that for all 
$y_1,y_2\in Y$ with $\max\{\|y_1\|_Y,\|y_2\|_Y\}<r$ and for every $u\in X$
\begin{equation*}
|\Phi(u,y_1)-\Phi(u,y_2)| \leq \exp(\epsilon\|u\|^2_{X}+C)\|y_1-y_2\|_Y.
\end{equation*}
\end{itemize}
\end{asp}

We now recall two results from \cite{CDRS09} 
concerning well-definedness 
and well-posedness of the posterior measure.

\begin{theorem}\label{t:welldb} \cite{CDRS09}
Let $\Phi$ satisfy Assumptions \ref{asp1}(i)--(iii).
Assume that $\mu_0$ is a Gaussian measure 
with $\mu_0(X=1)$.
Then $\mu^y$ given by (\ref{eq:radon}) is a 
well-defined probability measure.
\end{theorem}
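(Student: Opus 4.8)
The plan is to verify the two defining properties of a probability measure for $\mu^y$: that the normalization constant $Z(y)$ in \eqref{eq:norma} is finite and strictly positive, and that the resulting $\mu^y$ given by \eqref{eq:radon1} is countably additive (hence a genuine probability measure on $X$). The key structural point is that all the hard analytic work has been offloaded onto Assumptions \ref{asp1}(i)--(iii) on $\Phi$, while the probabilistic input is the single hypothesis that the Gaussian prior charges $X$ full measure, $\mu_0(X)=1$. The crucial technical device throughout is the Fernique theorem, which guarantees that $\exp(\alpha\|u\|_X^2)$ is $\mu_0$-integrable for some $\alpha>0$; this is exactly what is needed to control the lower bound on $\Phi$ coming from Assumption \ref{asp1}(i).

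First I would fix the data $y$ with $\|y\|_Y<r$ for some $r>0$ and establish $Z(y)<\infty$: since $\Phi(u,y)\geq M-\epsilon\|u\|_X^2$ by Assumption \ref{asp1}(i), we have $\exp(-\Phi(u,y))\leq \exp(-M+\epsilon\|u\|_X^2)$, and choosing $\epsilon$ smaller than the Fernique exponent $\alpha$ for $\mu_0$ gives $Z(y)\leq \e^{-M}\int_X \exp(\epsilon\|u\|_X^2)\,\mu_0(\ud u)<\infty$. Next I would show $Z(y)>0$: by Assumption \ref{asp1}(ii), on the ball $\{\|u\|_X<\rho\}$ (taking $\rho$ at least as large as $r$ so the hypothesis applies) we have $\Phi(u,y)\leq K(\max\{\rho,r\})$, hence $\exp(-\Phi(u,y))\geq \e^{-K}$ there, so $Z(y)\geq \e^{-K}\mu_0\{u:\|u\|_X<\rho\}$. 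Since $\mu_0(X)=1$, this $\mu_0$-measure is positive for $\rho$ large enough (any Gaussian measure on a separable Banach space assigns positive mass to every ball, or simply by full measure the balls exhaust $X$), so $Z(y)>0$. Thus \eqref{eq:radon1} defines a finite positive measure.

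It then remains to check countable additivity of $\mu^y$. This follows from the dominated convergence theorem: for any sequence of disjoint Borel sets $A_n$, the partial sums $\sum_{n\leq N}\int_{A_n}\exp(-\Phi(u,y))\,\mu_0(\ud u)$ are dominated by the integrable function $\e^{-M+\epsilon\|u\|_X^2}$ used above, and converge pointwise in $N$ to the integral over $\bigcup_n A_n$; Assumption \ref{asp1}(iii), which gives that $\Phi(\cdot,y)$ is continuous hence Borel measurable on $X$, ensures $\exp(-\Phi(\cdot,y))$ is a legitimate measurable integrand. Normalizing by $Z(y)$ gives that $\mu^y$ is a probability measure absolutely continuous with respect to $\mu_0$. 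The only step requiring genuine care is the finiteness of $Z(y)$, where one must invoke Fernique's theorem to control $\int_X \exp(\epsilon\|u\|_X^2)\,\mu_0(\ud u)$ and match the free parameter $\epsilon$ in Assumption \ref{asp1}(i) against the Fernique exponent; everything else is bookkeeping. Since this is a result quoted from \cite{CDRS09}, I would expect the proof to be stated compactly along these lines, with a reference back to the original for the routine details.
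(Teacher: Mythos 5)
Your proposal is correct and follows essentially the argument of the cited source: the paper itself states Theorem \ref{t:welldb} without proof (quoting \cite{CDRS09}), but the two key steps you give---integrability of $\exp(-\Phi)$ via Assumption \ref{asp1}(i) with $\epsilon$ chosen below the Fernique exponent, and positivity of $Z(y)$ via the lower bound $Z(y)\ge \e^{-K(r)}\mu_0(\|u\|_X<r)>0$ from Assumption \ref{asp1}(ii) together with $\mu_0(X)=1$---are exactly the ones the paper deploys (the latter appears verbatim in the proof of Theorem \ref{thm:dwellposterior}), with Assumption \ref{asp1}(iii) supplying measurability of the density. No gaps.
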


One can also show continuity of the posterior in the 
Hellinger metric $\dhh$ (see \cite{CDaS10} for the 
definition) with respect to the data $y$.
For any two measures $\mu$ and $\mu'$ both absolutely continuous
with respect to the same reference measure, and function
$G: X \to S$ with $S$ a Banach space, 
\begin{align}\label{e:generalE}
\|\bbE^\mu G(u)-\bbE^{\mu'}G(u)\|_S\le C\bigl(\bbE^{\mu}\|G(u)\|_{S}^2+\bbE^{\mu'}\|G(u)\|_{S}^2\bigr)^{\frac12}
\dhh(\mu,\mu').
\end{align}
Theorem \ref{t:wellpb} which follows
is hence quite useful: for example it implies
Lipschitz continuity of the posterior mean with respect to data. 
(See \cite{CDRS09}, Section 2).
\begin{theorem}\label{t:wellpb}
\cite{CDRS09}
Let $\Phi$ satisfy Assumptions \ref{asp1}(i)--(iv).
Assume that $\mu_0$ is a Gaussian measure 
with $\mu_0(X)=1$.  Then
$$
\dhh(\mu^y,\mu^{y'})\le C\,\|y-y'\|_{Y}
$$
where $C=C(r)$ with $\max\{\|y\|_{Y},\|y'\|_{Y}\}\le r$.
\end{theorem}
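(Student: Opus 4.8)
The plan is to argue directly from the definition of the Hellinger metric,
\[
\dhh(\mu^y,\mu^{y'})^2=\frac12\int_X\Bigl(\sqrt{\tfrac{d\mu^y}{d\mu_0}}-\sqrt{\tfrac{d\mu^{y'}}{d\mu_0}}\Bigr)^2\mu_0(\ud u),
\]
substituting the densities \eqref{eq:radon1}. Adding and subtracting $Z(y)^{-1/2}\exp\bigl(-\tfrac12\Phi(u;y')\bigr)$ inside the integrand and using $(a+b)^2\le 2a^2+2b^2$ gives $2\dhh(\mu^y,\mu^{y'})^2\le I_1+I_2$, where
\[
I_1=\frac{2}{Z(y)}\int_X\Bigl(\e^{-\frac12\Phi(u;y)}-\e^{-\frac12\Phi(u;y')}\Bigr)^2\mu_0(\ud u),\qquad
I_2=2\,Z(y')\,\bigl|Z(y)^{-1/2}-Z(y')^{-1/2}\bigr|^2 .
\]
The whole estimate then reduces to three ingredients: a uniform positive lower bound for $Z(\cdot)$ on the ball $\{\|y\|_Y\le r\}$, a uniform pointwise upper bound for $\e^{-\Phi(u;y)}$, and a Lipschitz-in-$y$ bound for $\Phi$. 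The first follows from Assumption \ref{asp1}(ii): on $\{\|u\|_X<r\}$ we have $\Phi(u;y)\le K(r)$, so $Z(y)\ge \e^{-K(r)}\mu_0(\{\|u\|_X<r\})>0$, the last quantity being strictly positive because $\mu_0$ is Gaussian with $\mu_0(X)=1$. The second follows from Assumption \ref{asp1}(i): for any $\epsilon>0$, $\e^{-\Phi(u;y)/2}\le \exp\bigl(\tfrac{\epsilon}{2}\|u\|_X^2-\tfrac12 M(\epsilon,r)\bigr)$ for all $\|y\|_Y\le r$ and all $u$. The third is Assumption \ref{asp1}(iv) itself.

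To control $I_1$ I would use the elementary bound $|\e^{-a/2}-\e^{-b/2}|\le\tfrac12\max(\e^{-a/2},\e^{-b/2})\,|a-b|$ with $a=\Phi(u;y)$ and $b=\Phi(u;y')$, combined with the two displayed bounds above and Assumption \ref{asp1}(iv); the integrand in $I_1$ is then dominated by a constant depending on $r$ (through $M$, $C$ and the lower bound on $Z$) times $\exp\bigl(3\epsilon\|u\|_X^2\bigr)\|y-y'\|_Y^2$. Since $\epsilon>0$ is at our disposal, we fix it small enough that the Fernique theorem gives $\int_X\exp(3\epsilon\|u\|_X^2)\,\mu_0(\ud u)<\infty$, whence $I_1\le C(r)\|y-y'\|_Y^2$. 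For $I_2$ I would first note $\bigl|Z(y)^{-1/2}-Z(y')^{-1/2}\bigr|\le C\,|Z(y)-Z(y')|$, again using the lower bound on $Z$, and then $|Z(y)-Z(y')|\le\int_X\bigl|\e^{-\Phi(u;y)}-\e^{-\Phi(u;y')}\bigr|\,\mu_0(\ud u)\le C(r)\|y-y'\|_Y$ by exactly the same combination of Assumptions \ref{asp1}(i),(iv) and the Fernique theorem. Hence $I_2\le C(r)\|y-y'\|_Y^2$, and adding the bounds on $I_1$ and $I_2$ and taking square roots yields the claim with $C=C(r)$.

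The argument is a chain of soft estimates, so no single step is genuinely hard; the point that requires care is the bookkeeping of constants. One must keep track of how $M$, $K$, $C$ and the admissible range of $\epsilon$ depend on $r$, and check that the cumulative exponent appearing in the Gaussian integrals --- here $3\epsilon$, but potentially larger if one is careless about which inequalities to chain --- can always be forced below the Fernique integrability threshold by choosing $\epsilon$ small at the outset. This is legitimate precisely because Assumptions \ref{asp1}(i) and (iv) are assumed for \emph{every} $\epsilon>0$. The only structural input beyond the four analytic conditions on $\Phi$ is that $\mu_0$ is Gaussian, which is exactly what makes the Fernique theorem available.
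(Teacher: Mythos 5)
Your argument is correct and is essentially the proof given in the cited reference \cite{CDRS09} (the paper itself states Theorem \ref{t:wellpb} without proof): split the Hellinger integrand via the normalization constants, bound $Z$ below using Assumption \ref{asp1}(ii) and $\mu_0(X)=1$, control the exponential differences via Assumptions \ref{asp1}(i) and (iv), and integrate with the Fernique theorem after fixing $\epsilon$ small enough. The one point worth making explicit is the finite upper bound on $Z(y')$ (from Assumption \ref{asp1}(i) and Fernique), which you need to close the estimate on $I_2$; it is implicit in your listed ingredients but should be stated.
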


\subsection{Approximation of the posterior}

In this section we recall a result concerning
approximation of $\muy$
on the Banach space $X$ when the function $\Phi$
is approximated. This will be used in the next
subsection for approximation of $\muy$ on a finite
dimensional space.
Consider $\Phi^N$ to be an approximation of $\Phi$. 
Here we state a result which quantifies the effect of this approximation in the posterior measure in terms of
the aproximation error in $\Phi$. 

Define $\mu^{y,N}$ by
\begin{subequations}
\label{eq:muN}
\begin{equation}
\label{eq:muNa}
\frac{\ud\mu^{y,N}}{\ud\mu_0}(u)
=\frac{1}{Z^N(y)}\exp\bigl(-\Phi^N(u)\bigr),
\end{equation}
\begin{equation}
\label{eq:Anormz2}
Z^N(y)=\int_{X} \exp\bigl(-\Phi^N(u)\bigr) \ud\mu_0(u).
\end{equation}
\end{subequations}
We suppress the dependence of $\Phi$ and $\Phi^N$
on $y$ in this section as it is considered fixed.

%
%
\begin{theorem}
\label{t:wellp2b}
\cite{CDaS10}
Assume that the measures $\mu$ and $\mu^N$ are both absolutely continuous with respect to a Gaussian $\mu_0$ with $\mu_0(X)=1$, and given by (\ref{eq:radon}) 
and (\ref{eq:muN}) respectively. 
Suppose that $\Phi$ and $\Phi^N$ satisfy 
Assumptions \ref{asp1}(i) and (ii), uniformly in $N$,
and that
for any $\epsilon>0$ there exists $C=C(\epsilon)\in\R$ such that
\begin{equation*}
|\Phi(u)-\Phi^N(u)| \leq \exp(\epsilon\|u\|^2_{X}+C)\psi(N)
\end{equation*}
where $\psi(N)\rightarrow 0$ as $N\rightarrow \infty$.
Then there exists a constant independent of $N$ such that
\begin{equation*}
\dhh(\mu, \mu^N)\leq C\psi(N).
\end{equation*}
\end{theorem}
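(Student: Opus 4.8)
The plan is to estimate the squared Hellinger distance directly from its definition in terms of the densities with respect to the common reference measure $\mu_0$,
\[
\dhh(\mu,\mu^N)^2=\frac12\int_X\Bigl(Z^{-1/2}\e^{-\Phi(u)/2}-(Z^N)^{-1/2}\e^{-\Phi^N(u)/2}\Bigr)^2\,\mu_0(\ud u).
\]
First I would record the two structural facts that drive the estimate. From Assumption \ref{asp1}(ii), applied uniformly in $N$, both $\Phi$ and $\Phi^N$ are bounded above by some $K(r)$ on the ball $\{\|u\|_X<r\}$, which has positive $\mu_0$-measure once $r$ is large (since $\mu_0(X)=1$); hence $Z$ and $Z^N$ are bounded below by a positive constant independent of $N$. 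From Assumption \ref{asp1}(i), applied uniformly in $N$, we have $\Phi(u),\Phi^N(u)\ge M-\eps\|u\|_X^2$, so $\e^{-\Phi(u)/2}\le\e^{(\eps\|u\|_X^2-M)/2}$ and similarly for $\Phi^N$; choosing $\eps$ small and using the Fernique theorem shows $Z,Z^N$ are also bounded above, uniformly in $N$.

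Next I would split the integrand by the triangle inequality in $L^2(X;\mu_0)$ into a term carrying a fixed normalisation and a term measuring the discrepancy of the two normalisations:
\[
\dhh(\mu,\mu^N)^2\le\frac1Z\int_X\Bigl(\e^{-\Phi/2}-\e^{-\Phi^N/2}\Bigr)^2\mu_0(\ud u)+\Bigl|Z^{-1/2}-(Z^N)^{-1/2}\Bigr|^2Z^N=:I_1+I_2.
\]
For $I_1$ I would use the elementary bound $|\e^{-a/2}-\e^{-b/2}|\le\tfrac12\max(\e^{-a/2},\e^{-b/2})|a-b|$ (mean value theorem, monotonicity of $t\mapsto\e^{-t/2}$), combined with the pointwise exponential bounds above and the hypothesis $|\Phi(u)-\Phi^N(u)|\le\exp(\eps\|u\|_X^2+C)\psi(N)$, to dominate the integrand by $\psi(N)^2$ times $\exp(c\|u\|_X^2)$ for a constant $c$ that can be made as small as desired — both the exponent in Assumption \ref{asp1}(i) and the one in the approximation hypothesis are at our disposal. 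Fernique then gives $I_1\le C\psi(N)^2$ with $C$ independent of $N$. For $I_2$, the analogous inequality $|\e^{-a}-\e^{-b}|\le\max(\e^{-a},\e^{-b})|a-b|$ yields $|Z-Z^N|\le\int_X\max(\e^{-\Phi},\e^{-\Phi^N})|\Phi-\Phi^N|\,\mu_0(\ud u)\le C\psi(N)$, again by Fernique; together with the uniform upper and lower bounds on $Z,Z^N$ this gives $|Z^{-1/2}-(Z^N)^{-1/2}|\le C|Z-Z^N|\le C\psi(N)$, whence $I_2\le C\psi(N)^2$. Adding the two estimates and taking square roots gives $\dhh(\mu,\mu^N)\le C\psi(N)$.

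The routine parts are the two mean-value inequalities for differences of exponentials and the elementary algebra relating $|Z-Z^N|$ to $|Z^{-1/2}-(Z^N)^{-1/2}|$. The only genuine obstacle is the bookkeeping of the quadratic exponents: one must check that the exponent in the lower bound of Assumption \ref{asp1}(i) plus (twice) the exponent in the approximation hypothesis can be chosen small enough to fall within the range for which $\int_X\exp(c\|u\|_X^2)\,\mu_0(\ud u)<\infty$ by Fernique, and that every constant so produced is independent of $N$ — which is precisely what the uniformity in $N$ built into the hypotheses provides. Everything else is direct computation.
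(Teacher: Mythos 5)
Your argument is correct and is essentially the standard proof of this result: the paper itself only cites \cite{CDaS10} for it, and the proof given there proceeds exactly as you do — split the squared Hellinger distance into a term with a common normalisation constant and a term controlled by $|Z-Z^N|$, bound each via the mean value theorem applied to the exponential, the hypothesis on $|\Phi-\Phi^N|$, the uniform positive lower bounds on $Z,Z^N$ from Assumption \ref{asp1}(ii), and the Fernique theorem. Your bookkeeping of the quadratic exponents (choosing the $\epsilon$'s in Assumption \ref{asp1}(i) and in the approximation hypothesis small enough for Fernique integrability, uniformly in $N$) is precisely the point that makes the argument go through.
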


\subsection{Approximating the posterior measure 
in a finite dimensional space}\label{sec:approx}

In this section, we again consider approximation of the
posterior measure for the inverse problem
on $X\subseteq L^2(\TT^d)$. 
But here we additionally assume that the approximation
is made in a finite dimensional subspace
and hence corresponds to something that can
be implemented computationally. 
Our approximation space will be defined by truncating
the Karhunen-Lo\'eve basis $\{\ppsi_l\}_{l=1}^{\infty}$
comprising the eigenfunctions of the covariance
operator of the Gaussian measure $\mu_0.$ 
For simplicity we assume from now on that $\mu_0$ is centred
(zero mean).

Define the subpace 
$W^N$ spanned by the $\{\ppsi_l\}_{l=1}^{N}$ and 
let $W^\bot$ denote the complement of $W^N$
in $L^2(\bbT^d)$. 
For any $u\in X$ let
\begin{align}\label{eq:truncg}
u^N=\sum_{l=1}^N u_l\,\ppsi_l,\quad\mbox{with } u_l=(u,\ppsi_l),
\end{align}
where $(\cdot,\cdot)$ is the $L^2$-inner product.
The approximate posterior measure will induce
a measure on the coefficients $\{u_l\}_{l=1}^N$
appearing in \eqref{eq:truncg}, and hence a measure
$\nu^N$ on $u^N \in W^N.$ Our interest is in
quantifying the error incurred when approximating
expectations under $\muy$ by expectations
under $\nu^N.$

For simplicity we consider the case where the data $y$
is finite dimensional and the posterior measure is
defined via \eqref{eq:radon} with 
\begin{equation}
\label{eq:GF}
\Phi(u):=\frac{1}{2}\Big|\Gamma^{-\frac12}
\bigl(y-{\cal G}(u)\bigr)\Big|^2.
\end{equation}
Here $|\cdot|$ denotes the Euclidean norm and $\Gamma$
is assumed positive and symmetric.
We drop explicit $y$ dependence in $\Phi$ (and approximation
$\Phi^N$) 
throughout this section.

Note that $\mu_0$ factors as the product 
of two independent measures
$\mu_0^N\otimes\mu_0^\bot$ on $W^N\oplus W^\bot$.
Let $P^N$ be the orthogonal projection of $L^2(\TT^d)$ onto $W^N$,
and $P^\bot=I-P^N$.
Any $u\in X$ can be written as $u=u^N+u^\bot$ where
$u^N=P^N u$ is given by \eqref{eq:truncg}
and $u^\bot=P^\bot u$.
We define $\cG^N(\cdot)=\cG(P^N \cdot)$ and
consider the approximate measure \eqref{eq:muN}
with $\Phi^N$ given by
\begin{align}\label{eq:muGN}
\Phi^N(u)=\frac{1}{2}\bigl|\Gamma^{-1/2}(y-\cG^N(u))\bigr|^2.
\end{align}
Because $\cG^N(u)$ depends only on $u^N$,
and because $\mu_0=\mu_0^N\otimes\mu_0^\bot$
the resulting measure $\mu^{y,N}$ on $X$
can be factored as $\mu^{y,N}=\nu^N\otimes\mu^\bot$
where
\begin{align}\label{eq:muGN1}
\frac{\ud\nu^{N}}{\ud\mu_0^N}(u)
\propto \exp\bigl(-\frac{1}{2}|\Gamma^{-1/2}(y-\cG^N(u))|^2\bigr)
\end{align}
and $\mu^\bot=\mu_0^\bot$. The measure $\nu^N$ given by 
(\ref{eq:muGN1}) is finite dimensional and amenable
to statistical sampling techniques, such as MCMC.
For the purposes of this paper we assume 
that expectations with respect to $\nu^N$ on $W^N$ 
can be computed exactly. An overview of techniques
for sampling such measures, 
in a manner robust to increasing
$N$, can be found in \cite{CRSW10}.

We are interested in approximating
expectations under $\mu^y$
of functions $G:X\to S$, $S$ a Banach space.
For example, $G(u)$ may denote the pressure field, or covariance of
the pressure field for the elliptic inverse in Section \ref{sec:intro}.
Abusing notation, we will sometimes
write $G(u)=G(u^N,u^\bot)$.
In practice we are able to compute expectations of $G(u^N,0)$
under $\nu^N$.
Thus we are interested in estimating the weak error
\begin{align}\label{e:werror}
e=\|\bbE^{\mu^y} G(u^N,u^\bot)-\bbE^{\nu^N} G(u^N,0)\|_S.
\end{align}
We now state and prove a theorem concerning this error,
under the following assumptions on $\cG$ and $G$.

\begin{asp} \label{a:approx}
Assume that $X$, $X'$ and $X''$ are Banach spaces, and $X$ is continuously embedded into $X'$ and
$X'$ is continuously embedded into $X''$.
Suppose also that the centred Gaussian probability
measure $\mu_0$ satisfies $\mu_0(X)=1.$
Then, for all $\eps>0$, there is $K=K(\eps) \in (0,
\infty)$, such that, for all $u_1,u_2 \in X$,
\begin{align*}
|\cG(u_1)-\cG(u_2)| & \le K\exp\bigl(\eps\max\{
\|u_1\|_{X}^2,\|u_2\|_{X'}^2\}\bigr)\|u_1-u_2\|_{X''},\\
\|G(u_1)-G(u_2)\|_{S} & \le K\exp\bigl(\eps\max\{
\|u_1\|_{X}^2,\|u_2\|_{X'}^2\}\bigr)\|u_1-u_2\|_{X''}.
\end{align*}
Furthermore $\|P^{\bot}\|_{{\cal L}(X,X'')}=\psi(N)
\to 0$ as $N \to \infty$, and $\|P^{\bot}\|_{{\cal L}(X,X')}$
is bounded independently of $N$.
\end{asp}

\begin{theorem} \label{t:tapp}
Let Assumptions \ref{a:approx} hold and assume that
$\Phi$ and $\Phi^N$ are given by \eqref{eq:GF} and
\eqref{eq:muGN} respectively. Then the probability
measures $\mu^y$ and $\mu^{y,N}$ are
absolutely continuous with respect to $\mu_0$
and given by \eqref{eq:radon} and \eqref{eq:muNa}.
Furthermore the weak error \eqref{e:werror}
satisfies $e \le C\psi(N)$ as $N \to \infty.$
\end{theorem}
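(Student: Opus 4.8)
The plan is to split the weak error $e$ into two contributions: a \emph{measure error} coming from the fact that $\mu^{y,N}$ differs from $\mu^y$, and a \emph{truncation error} coming from replacing the argument $(u^N,u^\bot)$ by $(u^N,0)$ inside $G$. Concretely, write
\begin{align*}
e &\le \|\bbE^{\mu^y}G(u^N,u^\bot)-\bbE^{\mu^{y,N}}G(u^N,u^\bot)\|_S
 + \|\bbE^{\mu^{y,N}}G(u^N,u^\bot)-\bbE^{\mu^{y,N}}G(u^N,0)\|_S,
\end{align*}
noting that the second term equals $\|\bbE^{\nu^N}\bbE^{\mu_0^\bot}\bigl(G(u^N,u^\bot)-G(u^N,0)\bigr)\|_S$ because $\mu^{y,N}=\nu^N\otimes\mu_0^\bot$ and $G(u^N,0)$ does not depend on $u^\bot$.

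First I would handle the measure error. The key is to verify that $\Phi$ and $\Phi^N$ from \eqref{eq:GF} and \eqref{eq:muGN} satisfy Assumptions \ref{asp1}(i), (ii) uniformly in $N$, together with the approximation bound $|\Phi(u)-\Phi^N(u)|\le\exp(\eps\|u\|_X^2+C)\psi(N)$, so that Theorem \ref{t:wellp2b} applies and yields $\dhh(\mu^y,\mu^{y,N})\le C\psi(N)$. Items (i) and (ii) are immediate since $\Phi,\Phi^N\ge 0$ and, using the Cauchy--Schwarz / Lipschitz-at-a-point bound implied by Assumption \ref{a:approx} (take $u_2=0$), $|\cG(u)|$ and $|\cG^N(u)|=|\cG(P^Nu)|$ are bounded by $K\exp(\eps\|u\|_X^2)$ — here one uses that $\|P^N u\|_X \le \|u\|_X$ (or is at worst comparably bounded) so the exponents stay controlled uniformly in $N$. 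For the approximation bound, write $\Phi-\Phi^N$ as a difference of squares, $\Phi(u)-\Phi^N(u)=\frac12\la \Gamma^{-1}(2y-\cG(u)-\cG^N(u)),\cG^N(u)-\cG(u)\ra$, bound the first factor by $C\exp(\eps\|u\|_X^2)$ as above, and bound $|\cG(u)-\cG^N(u)|=|\cG(u)-\cG(P^Nu)|\le K\exp(\eps\max\{\|u\|_X^2,\|P^Nu\|_{X'}^2\})\|u-P^Nu\|_{X''}$ using Assumption \ref{a:approx}; since $u-P^Nu=P^\bot u$ and $\|P^\bot\|_{{\cal L}(X,X'')}=\psi(N)$, this gives the required form (after absorbing constants and relabelling $\eps$). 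Combined with \eqref{e:generalE} and the moment bound $\bbE^{\mu^y}\|G(u)\|_S^2<\infty$ (and likewise under $\mu^{y,N}$, uniformly in $N$), which follows from the Lipschitz-at-$0$ bound on $G$ plus the Fernique theorem applied to the Gaussian $\mu_0$ and a change-of-measure estimate controlling $Z(y),Z^N(y)$ from below, the measure error is $O(\psi(N))$.

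Next I would handle the truncation error. Using Assumption \ref{a:approx} with $u_1=(u^N,u^\bot)$ and $u_2=(u^N,0)$,
\begin{align*}
\|G(u^N,u^\bot)-G(u^N,0)\|_S \le K\exp\bigl(\eps\max\{\|u\|_X^2,\|u^N\|_{X'}^2\}\bigr)\|u^\bot\|_{X''}.
\end{align*}
Now $\|u^\bot\|_{X''}=\|P^\bot u\|_{X''}\le\psi(N)\|u\|_X$, and $\|u^N\|_{X'}\le C\|u\|_X$ since $\|P^N\|_{{\cal L}(X,X')}$ is bounded independently of $N$ (this is exactly why the hypothesis is stated for $\|P^\bot\|_{{\cal L}(X,X')}$, equivalently $\|P^N\|$, bounded in $N$). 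Hence the integrand is bounded by $C\psi(N)\exp(\eps\|u\|_X^2)\|u\|_X$, and taking $\bbE^{\mu^{y,N}}$ — using again that $\mathrm{d}\mu^{y,N}/\mathrm{d}\mu_0$ is bounded above by $C/Z^N(y)$ with $Z^N(y)$ bounded below uniformly in $N$, and then invoking the Fernique theorem to integrate $\exp(\eps\|u\|_X^2)\|u\|_X$ against $\mu_0$ — gives a bound $C\psi(N)$ with $C$ independent of $N$. Adding the two contributions yields $e\le C\psi(N)$.

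The main obstacle, and the step requiring the most care, is the uniform-in-$N$ control of all the exponential-moment integrals: one must check that the Fernique exponent $\eps$ can always be chosen small enough relative to the Gaussian measure $\mu_0$ (the constants $K$ in Assumption \ref{a:approx} blow up as $\eps\downarrow 0$, so there is a genuine trade-off), and that the normalisation constants $Z^N(y)$ are bounded away from zero uniformly in $N$. The latter follows because $\Phi^N(u)\le\frac12|\Gamma^{-1/2}|^2(|y|+|\cG^N(u)|)^2$ is finite and, on any fixed $\mu_0$-positive ball in $X$, bounded uniformly in $N$; I would also double-check that replacing $\|u_1\|_X$ by $\|u_1\|_{X'}$ versus $\|u_2\|_{X'}$ in the max inside Assumption \ref{a:approx} is used consistently (it matters which argument carries the stronger $X$-norm), which is the reason the theorem is phrased with that particular asymmetric bound.
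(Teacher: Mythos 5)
Your proposal is correct and follows essentially the same route as the paper: the same triangle-inequality splitting into a Hellinger/measure error (handled via the difference-of-squares bound on $\Phi-\Phi^N$, Theorem \ref{t:wellp2b} and \eqref{e:generalE}) and a truncation error (handled via the Lipschitz bound on $G$, the operator-norm bounds on $P^\bot$ and $P^N$, and the Fernique theorem). Your closing remarks on the uniform-in-$N$ normalisation constants and the asymmetric $\max\{\|u_1\|_X^2,\|u_2\|_{X'}^2\}$ in Assumption \ref{a:approx} correctly identify details the paper passes over more quickly.
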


\begin{proof}
Since $X\hookrightarrow X'\hookrightarrow X''$ we have,
for some constants $C_1, C_2>0$, $\|\cdot\|_{X''}\le C_1\|\cdot\|_{X'} \le
C_2\|\cdot\|_{X}.$ From the assumptions on $\cG$ and $P^\bot$
we deduce that Assumptions \ref{asp1}(i)--(iii)
hold for $\Phi$ and $\Phi^N$ given by \eqref{eq:GF} 
and \eqref{eq:muGN} respectively. Thus $\mu^y$
and $\mu^{y,N}$ are well-defined probability
measures, both absolutely continuous with respect to
$\mu_0$, and satisfying $\mu^y(X)=\mu^{y,N}(X)=1.$
By the triangle inequality we have $e\le e_1+e_2$ where
\begin{align}
e_1&=\|\bbE^{\mu^y} G(u^N,u^\bot)-\bbE^{\mu^{y,N}} G(u^N,u^\bot)\|_S,\label{e:e1}\\
e_2&=\|\bbE^{\mu^{y,N}} G(u^N,u^\bot)-\bbE^{\nu^{N}} G(u^N,0)\|_S.\nonumber
\end{align}

We first estimate $e_1$. Note that, by use
of the assumptions on $\cG$ and $\cG^N$, we have
that for any $\eps>0$ there is $K=K(\eps) \in (0,
\infty)$ such that
\begin{align*}
|\Phi(u)-\Phi^N(u)| &\le \frac12
\bigl|\Gamma^{-\frac12}\bigl(2y-\cG(u)-\cG^N(u)\bigr)\bigr|
\bigl|\Gamma^{-\frac12}\bigl(\cG(u)-\cG^N(u)\bigr)\bigr|\\
&\le K\exp\bigl(\eps\|u\|_{X}^2\bigr)\psi(N).
\end{align*}
By Theorem \ref{t:wellp2b} we deduce that the Hellinger
distance between $\mu^y$ and $\mu^{y,N}$ tends
to zero like $\psi(N)$ and hence, by \eqref{e:generalE},
that $e_1 \le C\psi(N)$. This last bound
follows after noting that the required integrability
of $\|G(u)\|_{S}^2$ and $\|G(u^N)\|_{S}^2$
follows from the Lipschitz
bound on $G$, the operator norm bound on $P^{\bot}$
and the Fernique theorem \cite{DaZa92}. 

We now estimate $e_2$.
Because $\bbE^{\mu^{y,N}} G(u^N,0)=\bbE^{\nu^N}G(u^N,0)$
we obtain 
\begin{align}\label{e:e2}
e_2\le \bbE^{\mu^{y,N}}\|G(u)-G(u^N)\|_S
\end{align}
From the Lipscthiz properties of $G$ we deduce that
\begin{align*}
e_2 & \le K_1(\eps) \bbE^{\mu^{y,N}} \Bigl(\exp\bigl(\eps\max\{
\|u\|_{X}^2,\|P^N u\|_{X'}^2\}\bigr)\|P^{\bot}u\|_{X''}\Bigr)\\
& \le K_2(\eps) \bbE^{\mu^{y,N}} \Bigl(\exp\bigl(C\eps
\|u\|_{X}^2\bigr)\|u\|_{X}\Bigr)\psi(N)\\
& \le K_3(\eps) \bbE^{\mu^{y,N}} \Bigl(\exp\bigl(2C\eps
\|u\|_{X}^2\bigr)\Bigr)\psi(N)
\end{align*}
where $C$ is independent of $\epsilon$.
The result follows by the Fernique theorem.
\end{proof}

The results of this section are quite general,
concerning a wide class of Bayesian inverse
problems for functions, using Gaussian priors.
These results can also be generalized to the case of 
Besov priors introduced in \cite{Las09} and will be
presented elsewhere.

In the next section we establish conditions which enable
application of Theorems \ref{t:welldb}, \ref{t:wellpb},
\ref{t:wellp2b} and \ref{t:tapp} to the 
specific elliptic inverse problem 
described in Section \ref{sec:intro}.
We then apply these results in section \ref{sec:post}. 
%


\section{Estimates on the observation operator}\label{sec:estimates}

In order to apply Theorems 
\ref{t:welldb}, \ref{t:wellpb}, \ref{t:wellp2b} and \ref{t:tapp}
to the elliptic inverse problem described
in section \ref{sec:intro}
we need to prove certain properties of 
the observation operator $\cG$ given by (\ref{eq:bvpa}),
viewed as a mapping from a Banach space $X$ into $\R^m$,
and the function $G:X\to S$.
Then the prior measure $\mu_0$ must be chosen so that $\mu_0(X)=1$.
It is hence desirable to find spaces $X$ which are as large as possible,
so as not to unduly restrict the prior, but for which the desired properties of the observation operator hold. 
As discussed in section \ref{sec:intro} we consider the observation operator 
obtained from either the pointwise measurements of $p$ or bounded linear functionals
of $p\in H^1(D)$. 
We obtain the bounds on the observation operator for each of these cases
in the following.

\subsection{Measurements from bounded linear functionals of $p\in H^1(D)$ }

In this case, using standard energy estimates, 
we have the following result: 

\begin{proposition}\label{pr:Gbnd0}
Consider equation (\ref{eq:epde}) with $D\subset \R^d$, $d=2,3$ a bounded domain, the boundary of $D$, $\partial D$, $C^1$-regular,
$f\in H^{-1}(D)$, $g\in L^{2}(D)$ 
and assume that $\phi$ may be extended to a function 
$\phi\in H^1(D)$. Then there exists a constant 
$C=C(\|f\|_{H^{-1}},\|g\|_{L^{2}},\|\phi\|_{H^1(D)})$ 
such that:
\begin{itemize}
\item[i)] if $u \in L^{\infty}(D)$ then
$$
\|p\|_{H^1(D)}\,\le\, C\, \exp(2\|u\|_{L^\infty(D)});
$$
\item[ii)] 
 if $u_1,u_2\in L^\infty(D)$, and $p_1,p_2$ are the
corresponding solutions of (\ref{eq:epde}), then
$$
\|p_1-p_2\|_{H^1(D)}\le C\,\exp(4\max\{\|u_1\|_{L^\infty},\|u_2\|_{L^\infty}\})\|u_1-u_2\|_{L^\infty}.
$$
\end{itemize}
\end{proposition}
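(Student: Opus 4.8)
The plan is to establish both estimates by the standard variational (Lax--Milgram) machinery for the divergence-form elliptic equation, exploiting the uniform ellipticity bounds $\lambda(u)=\e^{-\|u\|_{L^\infty}} \le \e^{u(x)} \le \e^{\|u\|_{L^\infty}}=\Lambda(u)$ that follow from $u\in L^\infty(D)$. First I would reduce to homogeneous boundary data: write $p=\phi+w$ with $w\in H^1_0(D)$, so that $w$ solves the weak problem $\int_D \e^u \nabla w\cdot\nabla v\,\ud x = \langle f,v\rangle - \int_D g\cdot\nabla v\,\ud x - \int_D \e^u\nabla\phi\cdot\nabla v\,\ud x$ for all $v\in H^1_0(D)$. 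The bilinear form is coercive with constant $\lambda(u)$ and bounded with constant $\Lambda(u)$, and the right-hand side is a bounded functional on $H^1_0(D)$ with norm controlled by $\|f\|_{H^{-1}}+\|g\|_{L^2}+\Lambda(u)\|\phi\|_{H^1(D)}$. Taking $v=w$ and using the Poincar\'e inequality gives $\|w\|_{H^1}\le \lambda(u)^{-1}\bigl(\|f\|_{H^{-1}}+\|g\|_{L^2}+\Lambda(u)\|\phi\|_{H^1}\bigr)$, and then $\|p\|_{H^1}\le \|\phi\|_{H^1}+\|w\|_{H^1}$. Since $\lambda(u)^{-1}=\e^{\|u\|_{L^\infty}}$ and $\Lambda(u)=\e^{\|u\|_{L^\infty}}$, the product $\lambda(u)^{-1}\Lambda(u)=\e^{2\|u\|_{L^\infty}}$ appears, which yields part (i) with the stated $\exp(2\|u\|_{L^\infty(D)})$ factor after absorbing the data-dependent quantities into $C$.

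For part (ii), the idea is to subtract the two weak formulations. With $p_i=\phi+w_i$, the difference $e:=p_1-p_2=w_1-w_2\in H^1_0(D)$ satisfies, for all $v\in H^1_0(D)$,
\begin{equation*}
\int_D \e^{u_1}\nabla e\cdot\nabla v\,\ud x = -\int_D \bigl(\e^{u_1}-\e^{u_2}\bigr)\nabla p_2\cdot\nabla v\,\ud x.
\end{equation*}
Testing with $v=e$, coercivity on the left gives $\lambda(u_1)\|\nabla e\|_{L^2}^2 \le \|\e^{u_1}-\e^{u_2}\|_{L^\infty}\|\nabla p_2\|_{L^2}\|\nabla e\|_{L^2}$, hence $\|e\|_{H^1}\le C\lambda(u_1)^{-1}\|\e^{u_1}-\e^{u_2}\|_{L^\infty}\|p_2\|_{H^1}$. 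Now I use the elementary Lipschitz bound $|\e^{a}-\e^{b}|\le \e^{\max\{|a|,|b|\}}|a-b|$ pointwise, giving $\|\e^{u_1}-\e^{u_2}\|_{L^\infty}\le \e^{\max\{\|u_1\|_{L^\infty},\|u_2\|_{L^\infty}\}}\|u_1-u_2\|_{L^\infty}$, and I insert the bound on $\|p_2\|_{H^1}$ from part (i). Counting the exponential factors: $\lambda(u_1)^{-1}$ contributes $\e^{\|u_1\|_{L^\infty}}$, the difference of exponentials contributes $\e^{\max\{\|u_1\|_{L^\infty},\|u_2\|_{L^\infty}\}}$, and $\|p_2\|_{H^1}\le C\e^{2\|u_2\|_{L^\infty}}$ contributes $\e^{2\|u_2\|_{L^\infty}}$, for a total of at most $\e^{4\max\{\|u_1\|_{L^\infty},\|u_2\|_{L^\infty}\}}$, which is exactly the claimed factor $\exp(4\max\{\|u_1\|_{L^\infty},\|u_2\|_{L^\infty}\})$.

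The steps are essentially routine energy estimates once the reduction to zero boundary data is made; there is no serious obstacle. The only points requiring mild care are: keeping the $\|\phi\|_{H^1(D)}$ contribution bundled correctly (the lifting argument needs the hypothesis that $\phi$ extends to an $H^1(D)$ function, and the term $\int_D \e^u\nabla\phi\cdot\nabla v$ is where the extra $\Lambda(u)$ factor enters in part (i)), and the bookkeeping of the exponential exponents to confirm that the constants $2$ and $4$ in the statement are sharp upper bounds given the $L^\infty$ two-sided ellipticity. I would also remark that all constants $C$ depend only on the domain (through Poincar\'e's constant) and on $\|f\|_{H^{-1}}$, $\|g\|_{L^2}$, $\|\phi\|_{H^1(D)}$, as asserted, and are independent of $u$.
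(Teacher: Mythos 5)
Your proof is correct and follows essentially the same route as the paper: reduce to homogeneous boundary data via $q=p-\phi$, test with the solution to get the energy estimate for part (i), and for part (ii) subtract the two weak formulations, test with $p_1-p_2$, and use the pointwise Lipschitz bound on $\e^{u_1}-\e^{u_2}$ together with the part (i) bound on $\|\nabla p_2\|_{L^2}$. The bookkeeping of the exponential factors matches the paper's as well.
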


\begin{corollary}\label{c:G1}
Let the assumptions of Proposition \ref{pr:Gbnd0} hold.
Consider $\cG(u)=(l_1(p),\dots,l_{\kk}(p))$ with $l_j$, $j=1,\dots,K$ bounded linear functionals on $H^1(D)$. 
Then for any $u,u_1,u_2\in L^\infty(D)$ we have
$$
|\cG(u)|\le C\,\exp(2\,\|u\|_{L^\infty(D)}),
$$
and
$$
|\cG(u_1)-\cG(u_2)|\,
\le\, C\,\exp(4\max\{\|u_1\|_{L^\infty},\|u_2\|_{L^\infty}\})\,\|u_1-u_2\|_{L^\infty(D)},
$$
with $C=C(K,\|f\|_{H^{-1}},\|g\|_{L^{2}},\|\phi\|_{H^1(D)},
{\rm{max}}_j\|l_j\|_{H^{-1}})$.
\end{corollary}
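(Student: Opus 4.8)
\textbf{Proof plan for Corollary \ref{c:G1}.}

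The plan is to derive both estimates directly from Proposition \ref{pr:Gbnd0} by exploiting the boundedness of each linear functional $l_j$ on $H^1(D)$. First I would recall that, by definition of the dual norm, for any $v \in H^1(D)$ we have $|l_j(v)| \le \|l_j\|_{H^{-1}}\,\|v\|_{H^1(D)}$, where I am writing $\|l_j\|_{H^{-1}}$ for the operator norm of $l_j$ as an element of $(H^1(D))^*$. (Strictly this is the norm in $(H^1(D))^*$; the notation $H^{-1}$ is a mild abuse when $D$ has boundary, but it is the one already in force in the statement.) Hence for $\cG(u) = (l_1(p),\dots,l_{\kk}(p))^T$ with $p$ the solution of \eqref{eq:epde} corresponding to $u$, the Euclidean norm satisfies
\begin{align*}
|\cG(u)|^2 = \sum_{j=1}^{K} |l_j(p)|^2 \le \Bigl(\max_j \|l_j\|_{H^{-1}}^2\Bigr)\, K\, \|p\|_{H^1(D)}^2.
\end{align*}

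Next I would insert the bound from Proposition \ref{pr:Gbnd0}(i), namely $\|p\|_{H^1(D)} \le C\exp(2\|u\|_{L^\infty(D)})$, to obtain $|\cG(u)| \le \sqrt{K}\,(\max_j\|l_j\|_{H^{-1}})\,C\,\exp(2\|u\|_{L^\infty(D)})$, which is the first claimed inequality with the constant $C$ renamed to absorb $\sqrt{K}$ and $\max_j\|l_j\|_{H^{-1}}$ together with the constant from the Proposition; note the dependence on the listed quantities is exactly as stated. For the Lipschitz bound, by linearity $l_j(p_1) - l_j(p_2) = l_j(p_1 - p_2)$, so the same Cauchy--Schwarz step over $j = 1,\dots,K$ gives $|\cG(u_1) - \cG(u_2)| \le \sqrt{K}\,(\max_j \|l_j\|_{H^{-1}})\,\|p_1 - p_2\|_{H^1(D)}$, and then Proposition \ref{pr:Gbnd0}(ii) supplies $\|p_1-p_2\|_{H^1(D)} \le C\exp(4\max\{\|u_1\|_{L^\infty},\|u_2\|_{L^\infty}\})\|u_1-u_2\|_{L^\infty}$, yielding the second inequality after the same relabelling of constants.

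There is no serious obstacle here: the corollary is a direct consequence of the Proposition combined with the elementary observation that a finite vector of bounded linear functionals is itself a bounded map into $\R^K$. The only point requiring a little care is bookkeeping of constants --- making sure the final $C$ depends only on $K$, $\|f\|_{H^{-1}}$, $\|g\|_{L^2}$, $\|\phi\|_{H^1(D)}$ and $\max_j\|l_j\|_{H^{-1}}$, and on nothing else (in particular not on $u$, $u_1$, $u_2$) --- which is immediate once one tracks that the exponential factors come entirely from the Proposition and the prefactors come entirely from the functional-analytic step. Tracking the exponents ($2\|u\|_{L^\infty}$ in the boundedness estimate and $4\max\{\cdots\}$ in the Lipschitz estimate) is likewise just a matter of copying them from Proposition \ref{pr:Gbnd0}.
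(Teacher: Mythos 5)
Your argument is correct and is precisely the intended one: the paper gives no separate proof of this corollary, treating it as an immediate consequence of Proposition \ref{pr:Gbnd0} via the operator-norm bound $|l_j(v)|\le \|l_j\|_{H^{-1}}\|v\|_{H^1(D)}$ applied componentwise, exactly as you do. Your bookkeeping of the constant and of the exponents matches the statement.
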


\begin{proof}[of Proposition \ref{pr:Gbnd0}]
{\it i}) Substituting $q=p-\phi$ in (\ref{eq:epde}) and taking the inner product with $q$, 
we obtain
\begin{align*}
\e^{-\|u\|_{L^\infty}} \|\nabla q\|_{L^2}\,
\le\, \e^{\|u\|_{L^\infty}}\,\|\nabla\phi\|_{L^2}+\|f\|_{H^{-1}}+\|g\|_{L^2}
\end{align*}
and therefore
\begin{align}\label{e:pu}
\|\nabla p\|_{L^2}
\le (1+\e^{2\|u\|_{L^\infty}})\,\|\nabla\phi\|_{L^2}
+\e^{\|u\|_{L^\infty}}(\|f\|_{H^{-1}}+\|g\|_{L^2})
\end{align}
which implies the result of part ({\it i}).

{\it ii}) 
The difference $p_1-p_2$ satisfies
$$
\nabla\cdot({\e}^{u_1}\nabla(p_1-p_2))=\nabla\cdot((\e^{u_2}-\e^{u_1})\nabla p_2).
$$
Taking the inner product of the equation with $p_1-p_2$ gives
\begin{align*}
\e^{-\|u_1\|_{L^\infty}}\|\nabla(p_1-p_2)\|_{L^2}^2
&\le \|e^{u_2}-e^{u_1}\|_{L^\infty}\|\nabla p_2\|_{L^2}\,\|\nabla(p_1-p_2)\|_{L^2}.
\end{align*}
For any $x\in D$ we can write
\begin{align*}
\bigl|\e^{u_2(x)}-\e^{u_1(x)}\bigr|
&=\Bigl|\big(u_2(x)-u_1(x)\big)\int_0^1 \e^{su_2+(1-s)u_1}\,\ud s\Bigr|\\
&\le \|u_2-u_1\|_{L^\infty}\,\e^{\max\{\|u_1\|_{L^\infty},\|u_2\|_{L^\infty}\}}.
\end{align*}
Hence using the estimate for $\|\nabla p_2\|_{L^2}$ from part (i), 
the result follows.
\end{proof}

\subsection{Pointwise measurements of $p$}
Here we obtain bounds on the
$L^\infty$-norm of the pressure $p$.

\begin{proposition}\label{pr:Gbnd}
Consider equation (\ref{eq:epde}) with $D\subset \R^d$, $d=2,3$ a bounded domain, the boundary of $D$, $\partial D$, $C^1$-regular, 
$f\in L^r(D)$, $g\in L^{2r}(D)$ with $r>d/2$ 
and $\phi\in L^\infty(\partial D)$. 
There exists $C=C(K,d,r,D,\sup_{\paro}|\phi|,\|f\|_{L^r},\|g\|_{L^{2r}})$ such that
$$
\|p\|_{L^\infty(D)}\,\le\, C\, \exp(\|u\|_{L^\infty(D)}).
$$
\end{proposition}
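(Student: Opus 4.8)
The plan is to obtain an $L^\infty$ bound on the solution $p$ of \eqref{eq:epde} using the De Giorgi--Stampacchia truncation method (equivalently, Stampacchia's maximum principle / Moser iteration), exploiting the uniform ellipticity constants $\lambda(u)$ and $\Lambda(u)$ and tracking their dependence on $\|u\|_{L^\infty(D)}$. First I would reduce to the homogeneous Dirichlet problem by splitting $p = \bar p + w$, where $\bar p$ is a fixed bounded extension of $\phi$ into $D$ with $\sup_D|\bar p| \le \sup_{\partial D}|\phi|$ (such an extension exists by assumption, taking e.g.\ a harmonic extension or a mollified constant-outside construction), so that $w \in H^1_0(D)$ solves an equation of the same form with modified right-hand side $-\nabla\cdot(\e^u\nabla\bar p) + f + \nabla\cdot g$. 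It then suffices to bound $\|w\|_{L^\infty(D)}$, since $\|p\|_{L^\infty} \le \|w\|_{L^\infty} + \sup_{\partial D}|\phi|$.

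Next I would set up the truncation estimate. For $k \ge 0$ let $w_k = (w-k)_+ \in H^1_0(D)$ and $A(k) = \{x \in D : w(x) > k\}$. Testing the weak form of the equation for $w$ against $w_k$ and using $\e^u \ge \lambda(u) = \e^{-\|u\|_{L^\infty}}$ on the left and $\e^u \le \Lambda(u) = \e^{\|u\|_{L^\infty}}$ together with the Cauchy--Schwarz and Hölder inequalities on the right, one gets
\begin{align*}
\lambda(u)\,\|\nabla w_k\|_{L^2}^2 \le \Lambda(u)\,\|\nabla\bar p\|_{L^2(A(k))}\|\nabla w_k\|_{L^2} + \bigl(\|f\|_{L^r} + \|g\|_{L^{2r}}\bigr)\,C\,|A(k)|^{\theta}\,\|\nabla w_k\|_{L^2},
\end{align*}
where the power $\theta = 1 - \frac1r \cdot \frac{?}{?}$ comes from Hölder's inequality combined with the Sobolev embedding $H^1_0(D) \hookrightarrow L^{2^*}(D)$ (or $L^q$ for any finite $q$ when $d=2$); the hypothesis $r > d/2$ is exactly what makes $\theta > \frac12 + \frac1d$, i.e.\ the exponent on $|A(k)|$ strictly exceeds the critical value needed for Stampacchia's iteration lemma to close. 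Dividing by $\|\nabla w_k\|_{L^2}$ and applying Sobolev again yields $\|w_k\|_{L^{2^*}} \le C\,\dfrac{\Lambda(u)}{\lambda(u)}\bigl(1 + \sup_{\partial D}|\phi|+\|f\|_{L^r}+\|g\|_{L^{2r}}\bigr)|A(k)|^{\theta}$, and since $\Lambda(u)/\lambda(u) = \e^{2\|u\|_{L^\infty}}$ this is where the exponential dependence on $\|u\|_{L^\infty}$ enters — though with constant $2$ in the exponent rather than $1$, so a slightly sharper bookkeeping (using $\Lambda(u)$ only where $\nabla\bar p$ appears and absorbing it more carefully) is needed to reach the stated $\exp(\|u\|_{L^\infty})$.

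Then I would invoke the standard Stampacchia lemma: if $\varphi(k) := \|w_k\|_{L^{2^*}}$-type quantity, or more precisely if $|A(h)| \le \bigl(C/(h-k)\bigr)^{\beta}|A(k)|^{\alpha}$ for $h > k \ge 0$ with $\alpha > 1$, then $w \le k_0$ a.e., with $k_0$ computable explicitly in terms of $C$, $|D|$, $\alpha$, $\beta$. Feeding in the estimate above gives $\mathrm{ess\,sup}_D\, w \le C\,\e^{\|u\|_{L^\infty}}\bigl(\sup_{\partial D}|\phi| + \|f\|_{L^r} + \|g\|_{L^{2r}}\bigr)$ with $C = C(d,r,D)$, and running the same argument for $-w$ controls $\mathrm{ess\,inf}_D\, w$, completing the $L^\infty$ bound on $p$. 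The main obstacle is the careful tracking of the ellipticity-ratio dependence: a naive application gives $\exp(2\|u\|_{L^\infty})$, and obtaining the sharp $\exp(\|u\|_{L^\infty})$ in the statement requires organizing the test-function estimate so that the factor $\Lambda(u)$ multiplying $\|\nabla\bar p\|_{L^2}$ is handled separately from — and does not compound with — the division by $\lambda(u)$; one route is to note that the contribution of the boundary data to $w$ can be estimated directly by the maximum principle (comparing with $\bar p$-driven terms) so that only a single power of $\e^{\|u\|_{L^\infty}}$ survives, and the $f,g$ contributions, being genuinely forced, also carry only $\lambda(u)^{-1}$ after the Hölder/Sobolev step if one does not throw away the good sign. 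I would also need to confirm the borderline Sobolev exponents when $d=3$ ($2^* = 6$, so $\theta$ involves $r > 3/2$) and when $d=2$ (use $L^q$ for large finite $q$), but these are routine once the iteration scheme is in place.
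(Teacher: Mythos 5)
Your core machinery --- the De Giorgi--Stampacchia truncation, with the admissible exponent on $|A(k)|$ coming precisely from $r>d/2$ --- is sound, and is in fact the content of Theorems 8.15--8.17 of Gilbarg--Trudinger, which the paper cites rather than re-proves. The genuine gap is in your reduction to homogeneous boundary data. You split $p=\bar p+w$ with $\bar p$ a bounded extension of $\phi$ and move $-\nabla\cdot(\e^{u}\nabla\bar p)$ to the right-hand side; but the hypothesis here is only $\phi\in L^\infty(\paro)$ (the $W^{1,2r}$ extension is assumed only in Proposition \ref{pr:GLip}), so a merely bounded extension need not satisfy $\nabla\bar p\in L^{2}(D)$, let alone $L^{2r}(D)$ --- the harmonic extension of generic $L^\infty$ boundary data has infinite Dirichlet energy --- and the quantity $\|\nabla\bar p\|$ in your truncation estimate is not controlled by the constants the statement allows. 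The same decomposition is the source of your $\Lambda(u)/\lambda(u)=\e^{2\|u\|_{L^\infty}}$ factor, which you acknowledge but never actually remove: the ``sharper bookkeeping'' you allude to is not a bookkeeping matter, it requires a different decomposition.

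The fix, and the paper's route, is to split the solution rather than extend the boundary data: write $p=p_h+p_0$, where $p_0\in H_0^1(D)$ solves the equation with forcing $f+\cdiv g$ and zero boundary data, and $p_h:=p-p_0$ solves the homogeneous equation $\nabla\cdot(\e^{u}\nabla p_h)=0$ with boundary data $\phi$. The weak maximum principle gives $\sup_D|p_h|\le\sup_{\paro}|\phi|$ with no dependence on $u$ at all, since the ellipticity constants drop out for the homogeneous problem. Your truncation argument applied to $p_0$ then uses only the lower bound $\e^{u}\ge\lambda$ (there is no $\nabla\bar p$ term to carry a factor of $\Lambda$), yielding $\sup_D|p_0|\le C\,\lambda^{-1}(\|f\|_{L^r}+\|g\|_{L^{2r}})$ with $C=C(d,r,D)$; this is GT Theorem 8.15, supplemented by the energy estimate $\|p_0\|_{L^2}\le C\lambda^{-1}(\|f\|_{L^r}+\|g\|_{L^{2}})$ to control the $L^2$ term appearing in that theorem. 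Since $\lambda^{-1}\le\e^{\|u\|_{L^\infty}}$ and $1\le\e^{\|u\|_{L^\infty}}$, the two pieces combine to give the stated single-exponential bound.
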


%
%

\begin{proposition}\label{pr:GLip}
Let the assumptions of Proposition \ref{pr:Gbnd} hold. 
Suppose also that
$\phi$ may be extended to a function $\phi\in W^{1,2r}(D)$. Then: 
\begin{itemize}
\item[i)] the mapping $u\mapsto p$ from $C(\bar D)$ into $L^\infty(D)$ is locally Lipschitz continuous;

\item[ii)] assume that $u_1\in L^\infty(D)$, $u_2\in C^t(D)$ for some $t>0$, and $p_1,p_2$ are the
corresponding solutions of (\ref{eq:epde}).
Then for any $\epsilon>0$ there exists 
$C$ depending on $K,d,t,\epsilon,\|f\|_{L^r},\|g\|_{L^{2r}}$ and $\|\phi\|_{W^{1,2r}}$ such that
\begin{align*}
\|p_1-p_2\|_{L^\infty(D)}
\le  C\,
\exp\left(c_0
      \max\{\|u_1\|_{L^\infty(D)},\|u_2\|_{C^t(D)}\}
              \right)\|u_1-u_2\|_{L^\infty(D)}
\end{align*}
where $c_0=4+(4+2d)/t+\epsilon$.
\end{itemize}
\end{proposition}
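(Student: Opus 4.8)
The plan is to reduce both parts to an $L^q$-bound, for a single $q>d$, on $\nabla p_2$, and then to extract that bound from elliptic regularity adapted to the modulus of continuity of the coefficient $\e^{u_2}$.

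As in the proof of Proposition~\ref{pr:Gbnd0}(ii), set $w=p_1-p_2$; then $w\in H^1_0(D)$ solves $-\nabla\cdot(\e^{u_1}\nabla w)=\nabla\cdot h$ with $h=(\e^{u_2}-\e^{u_1})\nabla p_2$, and $|\e^{u_2}-\e^{u_1}|\le \e^{\max\{\|u_1\|_{L^\infty},\|u_2\|_{L^\infty}\}}|u_1-u_2|$ pointwise, so $\|h\|_{L^q(D)}\le \e^{\max\{\|u_1\|_{L^\infty},\|u_2\|_{L^\infty}\}}\|u_1-u_2\|_{L^\infty}\|\nabla p_2\|_{L^q(D)}$. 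Since $\e^{u_1}$ is only bounded and measurable, I would invoke the De Giorgi--Nash--Moser / Stampacchia $L^\infty$-estimate for a divergence-form equation with source in divergence form and zero Dirichlet data: for any $q>d$, $\|w\|_{L^\infty(D)}\le C(d,q,|D|)\,\lambda(u_1)^{-1}\|h\|_{L^q(D)}=C\,\e^{\|u_1\|_{L^\infty}}\|h\|_{L^q(D)}$. Combining gives $\|p_1-p_2\|_{L^\infty(D)}\le C\,\e^{\|u_1\|_{L^\infty}+\max\{\|u_1\|_{L^\infty},\|u_2\|_{L^\infty}\}}\,\|\nabla p_2\|_{L^q(D)}\,\|u_1-u_2\|_{L^\infty}$, so everything comes down to $\|\nabla p_2\|_{L^q(D)}$ for some $q>d$ (such a $q\in(d,2r]$ being afforded by $r>d/2$, $f\in L^r$, $g\in L^{2r}$, $\phi\in W^{1,2r}$), with the right dependence on $u_2$. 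For part~(i) this is immediate: if $u_2\in C(\bar D)$ then $\e^{u_2}$ is uniformly continuous, hence VMO, and the $W^{1,q}$-estimate for divergence-form equations with continuous coefficients yields $\nabla p_2\in L^q(D)$ with a bound uniform over $u_2$ in a sufficiently small $C(\bar D)$-ball about any fixed $u_0$, since on such a ball the small-scale mean oscillation of $\e^{u_2}$ stays below the Calder\'on--Zygmund smallness threshold $\eps_0(d,q,\lambda,\Lambda)$; feeding this into the displayed bound gives local Lipschitz continuity of $u\mapsto p$ from $C(\bar D)$ into $L^\infty(D)$, with a non-explicit constant.

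For part~(ii) I would make the $W^{1,q}$-estimate quantitative. With $u_2\in C^t(D)$ one has $\mathrm{osc}_{B_\rho}\e^{u_2}\le \e^{\|u_2\|_{L^\infty}}[u_2]_{C^t}\rho^t$. Freezing the coefficient at the centre of a ball turns the equation into a perturbed Poisson equation whose perturbation is a contraction in $L^q$ once $\rho\le\rho_*:=c(d,q)\bigl(\e^{2\|u_2\|_{L^\infty}}[u_2]_{C^t}\bigr)^{-1/t}$; applying interior Calder\'on--Zygmund estimates on balls of radius $\rho_*$, a boundary version near $\partial D$ (flattening the $C^1$ boundary and using $\phi\in W^{1,2r}$), and summing over an $O(\rho_*^{-d})$-covering of $D$, every constant that arises is a fixed power of $\rho_*^{-1}\sim \e^{2\|u_2\|_{L^\infty}/t}[u_2]_{C^t}^{1/t}$. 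Together with $\|p_2\|_{L^\infty(D)}\le C\e^{\|u_2\|_{L^\infty}}$ from Proposition~\ref{pr:Gbnd}, this produces $\|\nabla p_2\|_{L^q(D)}\le C\,\e^{(2+(4+2d)/t)\|u_2\|_{L^\infty}}[u_2]_{C^t}^{m/t}$ for an explicit integer $m$; absorbing the polynomial factor via $[u_2]_{C^t}^{m/t}\le C_\eps\e^{\eps\|u_2\|_{C^t}}$ and using $\|u_2\|_{L^\infty}\le\|u_2\|_{C^t}$, $\|u_1\|_{L^\infty}\le\max\{\|u_1\|_{L^\infty},\|u_2\|_{C^t}\}$ in the reduction above yields the claimed exponent $c_0=4+(4+2d)/t+\eps$.

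The main obstacle is exactly this quantitative $W^{1,q}$-estimate for the $C^t$ coefficient with an explicit exponential-in-$\|u_2\|_{C^t}$ constant: the freezing argument only gives a local bound, the self-improving term $\|(\e^{u_2}-\e^{u_2(x_0)})\e^{-u_2(x_0)}\nabla p_2\|_{L^q(B_{\rho_*})}$ must be absorbed (requiring a real-variable covering/maximal-function argument, or iteration along a chain of balls), and the boundary contribution must be handled on the $C^1$ domain --- all while tracking every constant as an explicit power of $\rho_*^{-1}$ so that the final exponent is precisely $c_0$ rather than some unspecified $C(\|u_2\|_{C^t})$. By contrast the difference identity, the pointwise bound on $\e^{u_2}-\e^{u_1}$, and the $L^\infty$-bound on $w$ are routine given Propositions~\ref{pr:Gbnd} and~\ref{pr:Gbnd0}.
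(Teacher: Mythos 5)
Your proposal is correct and follows essentially the same route as the paper: the paper also reduces to an $L^\infty$ bound for $w=p_1-p_2$ via the Gilbarg--Trudinger estimate with the divergence-form source $(\e^{u_2}-\e^{u_1})\nabla p_2$, and then proves exactly the quantitative $W^{1,q}$ bound on $\nabla p_2$ you describe --- freezing the coefficient on balls of radius $\delta$ determined by the threshold $\sup_{B(x,\delta)}|a-a_0|\le \lambda/(4C_0(d,q))$, a partition of unity with boundary flattening on the $C^1$ domain, and an $O(\delta^{-d})$ covering count, with $\delta\sim(\lambda/\|a\|_{C^t})^{1/t}$ in the H\"older case (its Appendix A, adapting Shaposhnikov's lemmas). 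The exponent bookkeeping you sketch ($4$ from the $\Lambda_m/\lambda_m$ prefactors, $(4+2d)/t$ from the two powers of $\Lambda_m/\lambda_m$ in the $W^{1,q}$ constant, $\epsilon$ to absorb the polynomial factors in $\|u_2\|_{C^t}$) matches the paper's.
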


\begin{corollary}\label{c:G2}
Let the assumptions of Proposition \ref{pr:Gbnd} hold.
Consider $\cG(u)=(l_1(p),\dots,l_{\kk}(p))$ with $l_j$, $j=1,\dots,K$ pointwise evaluations of $p$ at $x_j\in D$. Then for any $u\in L^\infty(D)$ there exists $C=C(K,d,r,D,\sup_{\paro}|\phi|,\|f\|_{L^r},\|g\|_{L^{2r}})$ such that
$$
|\cG(u)|\le C\,\exp(\|u\|_{L^\infty(D)}).
$$
If $u_1\in L^\infty(D)$, $u_2\in C^t(D)$ for some $t>0$ and $\phi$ can be extended to a function $\phi\in W^{1,2r}(D)$ then,
for any $\epsilon>0$,
\begin{align*}
|\cG(u_1)-\cG(u_2)|
\le  C\,\exp\left(c_0
      \max\{\|u_1\|_{L^\infty(D)},\|u_2\|_{C^t(D)}\}
              \right)\|u_1-u_2\|_{L^\infty(D)}
\end{align*}
with $C=C(K,d,t,\epsilon,\|f\|_{L^r},\|g\|_{L^{2r}},\|\phi\|_{W^{1,2r}})$
and  $c_0=4+(4+2d)/t+\epsilon$ for any $\epsilon>0$. 
\end{corollary}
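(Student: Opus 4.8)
The plan is to derive Corollary~\ref{c:G2} directly from Propositions~\ref{pr:Gbnd} and~\ref{pr:GLip} by exploiting the fact that pointwise evaluation is a bounded linear functional on $L^\infty(D)$ with norm one. For the first bound, I would write $|\cG(u)|^2 = \sum_{j=1}^K |p(x_j)|^2 \le K\,\|p\|_{L^\infty(D)}^2$, take square roots, and insert the estimate $\|p\|_{L^\infty(D)} \le C\exp(\|u\|_{L^\infty(D)})$ from Proposition~\ref{pr:Gbnd}; absorbing the $\sqrt K$ into the constant (which is already allowed to depend on $K$) gives $|\cG(u)| \le C\exp(\|u\|_{L^\infty(D)})$ with the stated dependence of $C$.

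For the Lipschitz bound, the argument is identical in structure: since $l_j(p_1)-l_j(p_2) = p_1(x_j)-p_2(x_j)$, we have
\begin{align*}
|\cG(u_1)-\cG(u_2)| \le \sqrt{K}\,\|p_1-p_2\|_{L^\infty(D)}.
\end{align*}
Then I would invoke Proposition~\ref{pr:GLip}(ii), which under the hypotheses $u_1\in L^\infty(D)$, $u_2\in C^t(D)$, and $\phi\in W^{1,2r}(D)$ gives exactly
\begin{align*}
\|p_1-p_2\|_{L^\infty(D)} \le C\exp\!\bigl(c_0\max\{\|u_1\|_{L^\infty(D)},\|u_2\|_{C^t(D)}\}\bigr)\|u_1-u_2\|_{L^\infty(D)}
\end{align*}
with $c_0 = 4 + (4+2d)/t + \epsilon$. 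Multiplying through by $\sqrt K$ and renaming the constant yields the claimed inequality, with $C$ depending on $K,d,t,\epsilon,\|f\|_{L^r},\|g\|_{L^{2r}},\|\phi\|_{W^{1,2r}}$ precisely as asserted.

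There is essentially no obstacle here: the entire content has already been established in the two preceding propositions, and the corollary is a packaging statement that records the consequences for the concatenated observation operator $\cG$ in the pointwise-evaluation scenario. The only point requiring a word of care is that the two estimates hold under slightly different hypotheses on $\phi$ — the boundedness bound needs only $\phi\in L^\infty(\partial D)$ (via Proposition~\ref{pr:Gbnd}), whereas the Lipschitz bound additionally requires the $W^{1,2r}(D)$-extension — and the statement of the corollary reflects this by imposing the stronger hypothesis only for the second inequality. I would simply state the two bounds in sequence, each preceded by the elementary reduction $|\cG(\cdot)| \le \sqrt K\,\|\cdot\|_{L^\infty(D)}$ applied to $p$ and to $p_1-p_2$ respectively.
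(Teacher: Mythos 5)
Your argument is correct and is exactly the intended one: the paper leaves Corollary \ref{c:G2} without a separate proof precisely because it follows immediately from Propositions \ref{pr:Gbnd} and \ref{pr:GLip} by bounding the concatenated pointwise evaluations via $|\cG(u_1)-\cG(u_2)|\le \sqrt{K}\,\|p_1-p_2\|_{L^\infty(D)}$ and absorbing $\sqrt{K}$ into the constant. Your remark about the differing hypotheses on $\phi$ for the two bounds is also accurate.
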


%
%
\begin{proof}[of Proposition \ref{pr:Gbnd}]
By Theorem 8.15 and 8.17 of \cite{GT83} we have,
recalling the definition of $\lambda$,
\begin{align*}
\sup_{D}|p|
&\le \sup_{\paro}|\phi|+ C(d,r,|D|)
     \,\lt(\|p_0\|_{L^2}+\frac{1}{\lambda}\|f\|_{L^r}+\frac{1}{\lambda}\|g\|_{L^{2r}}\rt)
\end{align*}
where $p_0$ is the solution of (\ref{eq:epde}) with $\phi=0$.
Taking the inner product of (\ref{eq:epde}) with $p_0$  
we obtain, for $c_i=c_i(d,r,|D|)$,
\begin{align*}
\|\nabla p_0\|_{L^2}&\le \frac{c_1}{\lambda}\,(\|f\|_{H^{-1}}+\|g\|_{L^2})\\
&\le \frac{c_2}{\lambda}\,(\|f\|_{L^r}+\|g\|_{L^2}).
\end{align*}
Hence
\begin{align}\label{eq:psup}
\sup_D|p|\le \frac{1}{\lambda}C(d,r,D)\,(\sup_{\partial D}|\phi|+\|f\|_{L^r}+\|g\|_{L^{2r}})
\end{align}
and the result follows since $\lambda\ge \e^{-\|u\|_{L^\infty(D)}}$.
\end{proof}

%
%
\begin{proof}[of Proposition \ref{pr:GLip}]
 The difference $p_1-p_2$ satisfies
\begin{align}\label{eq:q}
\begin{array}{rl}
\nabla\cdot \lt(\e^{u_1}\nabla (p_1-p_2)\rt)=\cdiv F,&x\in D,\\
p_1(x)=p_2(x),& x\in\partial D,
\end{array}
\end{align}
where
$$
F=- (e^{u_1(x)}-e^{u_2(x)})\nabla p_2 .
$$
Let 
$$
\lambda_m=\min\{\lambda(u_1),\lambda(u_2)\}\quad\mbox{ and } 
\Lambda_m=\max\{\Lambda(u_1),\Lambda(u_2)\}.
$$
By (\ref{eq:psup}) we have
\begin{align}\label{eq:supq}
\sup_{D} |p_1-p_2|
\le \frac{C(d,r,D)}{\lambda_m} \|F\|_{L^{2r}},
\end{align}
for $r>d/2$. Now, $\|F\|_{L^{2r}}$ may be estimated as follows, 
using Theorem \ref{t:est},
\begin{align*}
\|F\|_{L^{2r}}
&\le \|(\e^{u_1}-\e^{u_2})\,\nabla p_2\|_{L^{2r}}\\
&\le \Lambda_m\,\|u_1-u_2\|_{L^\infty}\,\|\nabla p_2\|_{L^{2r}}\\
&\le \Lambda_m\,C(d,r,D,u_2)
(\|f\|_{L^{r}}+\|g\|_{L^{2r}}+\|\phi\|_{W^{1,2r}})\,\|u_1-u_2\|_{L^\infty(D)}
\end{align*}
where in the last line we have used 
the fact that $2rd/(2r+d)<r$. This gives the Lipschitz continuity. 

To show the second part, we use (\ref{eq:supq}) and 
Corollary \ref{c:est} to write
\begin{align*}
\|p_1-p_2\|_{L^\infty(D)}
&\le \frac{1}{\lambda_m}C(d,r,D)\|F\|_{L^{2r}}\\
&\le \frac{\Lambda_m}{\lambda_m}C(d,r,D)\|\nabla p_2\|_{L^{2r}}\,\|u_1-u_2\|_{L^\infty(D)}\\
&\le  C(d,D,r,t)\frac{\Lambda_m(1+\Lambda_m)}{\lambda_m^2}
        \Big(1+(\Lambda_m/\lambda_m)^\frac{1+d}{t}\|u_2\|_{C^t(D)}^\frac{1+d}{t}\Big)\\
&\qquad\quad\times\Big(1+(\Lambda_m/\lambda_m)^{\frac{1}{t}}\|u_2\|_{C^t(D)}^\frac{1}{t}\Big)\\
&\qquad\quad\times(\|f\|_{L^{r}}+\|g\|_{L^{2r}}+\|\phi\|_{W^{1,2r}})\,\|u_1-u_2\|_{L^\infty(D)}.
\end{align*}
Since $1/\lambda_m,\Lambda_m\le \exp\{ \|u_1\|_{C^t},\|u_2\|_{C^t} \}$ and 
for any $\epsilon>0$ there exists $c=c(\epsilon,t)$ such that
\begin{align*}
&\frac{\Lambda_m(1+\Lambda_m)}{\lambda_m^2}
        \Big(1+(\Lambda_m/\lambda_m)^\frac{1+d}{t}\|u_2\|_{C^t(D)}^\frac{1+d}{t}\Big)
        \Big(1+(\Lambda_m/\lambda_m)^{\frac{1}{t}}\|u_2\|_{C^t(D)}^\frac{1}{t}\Big)\\
&\quad\le\, C(\epsilon)\exp\left(c_0(t)\max\{ \|u_1\|_{L^\infty},\|u_2\|_{C^t} \} \right)
\end{align*}
with  $c_0=4+(4+2d)/t+\epsilon$. The result follows.
\end{proof}

We can now summarize our assumptions on the forcing function 
and boundary conditions of (\ref{eq:epde}) for our two choices of the observation operator $\cG$ as follows. We will use these 
assumptions in subsequent sections. In both cases covered
by these assumptions it is a consequence that $\Phi$
and $\cG$ satisfy Assumptions \ref{asp1} and \ref{a:approx}. 

\begin{asp}\label{asp:Gform}
We consider the observation operator $\cG(u)=(l_1(p),\dots,l_{\kk}(p))$ 
defined as in (\ref{eq:bvpa}) with $D\subset \bbR^d$ a
bounded open set with $C^1$ regular boundary. Then
either: 

\begin{itemize}

\item[i)] the mapping $l_j$ is a bounded linear functional on $H^1(D)$ for $j=1,\dots,K$ and
we assume that $f\in H^{-1}(D)$, $g\in L^2(D)$ and 
$\phi$ may be extended to $\phi\in H^1(D)$; or

\item[ii)] the mapping $l_j$ is the pointwise 
evaluation of $p$ at a point $x_j\in D$ 
for $j=1,\dots,K$ and
we assume that $f\in L^r(D)$, $g\in L^{2r}(D)$ and 
$\phi$ may be extended to  $\phi\in W^{1,2r}(D)$, with $r>d/2$.

\end{itemize}
\end{asp}


\section{Properties of the posterior measure for the
elliptic inverse problem}\label{sec:post}

We use the properties of the observation operator
$\cG$ for the elliptic problem to establish
well-posedness of the inverse problem, and to
study approximation of the posterior measure
in finite dimensional spaces defined via
Fourier truncation.

\subsection{Well-posedness of the posterior measure}
\label{ssec:wd}

Here we show the well-definedness of the posterior measure and its continuity with respect to the data for the elliptic problem. 
We have the following theorem:

%
%

\begin{theorem}\label{thm:dwellposterior}
Consider the inverse problem for finding $u$ 
from noisy observations of $p$ in the form of (\ref{eq:obs}) 
and with $p$ solving (\ref{eq:epde}) in $D=\TT^d$. Let Assumption \ref{asp:Gform} hold and
consider $\mu_0$ to be distributed as $\cN(0,(-\Delta)^{-s})$ with $\Delta$ the Laplacian operator acting on $H^2(\TT^d)$ Sobolev functions with zero average on $\TT^d$,
and $s>d/2$.
Then the measure $\mu(\ud u|y)=\bbP(\ud u|y)$ is absolutely continuous
with respect to $\mu_0$ with Radon-Nikodym derivative 
given by \eqref{eq:radon}, \eqref{eq:GF}. 
Furthermore, the posterior measure is continuous in the Hellinger metric with respect to the data:
$$
\dhh(\mu^{y},\mu^{y'})\le C|y-y'|.
$$
\end{theorem}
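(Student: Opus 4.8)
The plan is to verify that the general machinery of Theorems~\ref{t:welldb} and~\ref{t:wellpb} applies, which reduces everything to two ingredients: (a) checking that $\Phi$ defined by \eqref{eq:GF} satisfies Assumptions~\ref{asp1}(i)--(iv) with a suitable Banach space $X$, and (b) confirming that the chosen Gaussian prior $\mu_0 = \cN(0,(-\Delta)^{-s})$ charges $X$ full measure, i.e. $\mu_0(X)=1$. For (b), the natural choice dictated by Section~\ref{sec:estimates} is $X = C(\TT^d)$ (or more safely a H\"older space $C^t(\TT^d)$ for small $t>0$, to accommodate the pointwise-measurement case in Corollary~\ref{c:G2} which needs $u_2 \in C^t$). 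The standard Kolmogorov/Sobolev-embedding argument for Gaussian random fields shows that draws from $\cN(0,(-\Delta)^{-s})$ on $\TT^d$ lie in $H^\sigma(\TT^d)$ almost surely for any $\sigma < s - d/2$, and since $s > d/2$ we may pick $\sigma \in (0, s-d/2)$ with $\sigma > t$; then Sobolev embedding $H^\sigma(\TT^d) \hookrightarrow C^t(\TT^d)$ gives $\mu_0(C^t(\TT^d))=1$. I would cite \cite{DaZa92,St10} for this.

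For (a), I would invoke the bounds on $\cG$ already established in Corollaries~\ref{c:G1} and~\ref{c:G2}. Both give $|\cG(u)| \le C\exp(a\|u\|_X)$ and a local Lipschitz estimate $|\cG(u_1)-\cG(u_2)| \le C\exp(b\max\{\|u_1\|_X,\|u_2\|_X\})\|u_1-u_2\|_X$ with $\|\cdot\|_X$ the relevant $L^\infty$- or $C^t$-norm. From $\Phi(u) = \tfrac12|\Gamma^{-1/2}(y-\cG(u))|^2$, positivity of $\Phi$ gives Assumption~\ref{asp1}(i) trivially (take $M=0$). Assumption~\ref{asp1}(ii) follows since on $\|u\|_X < r$, $\|y\|_Y < r$ the term $|\cG(u)|$ is bounded, so $\Phi$ is bounded. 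For (iii), expand $\Phi(u_1)-\Phi(u_2) = \tfrac12\langle \Gamma^{-1}(2y-\cG(u_1)-\cG(u_2)), \cG(u_2)-\cG(u_1)\rangle$ and bound the first factor by a constant (on the ball) and the second by the Lipschitz estimate. For (iv), $\Phi(u,y_1)-\Phi(u,y_2)$ is linear-plus-bounded in $\cG(u)$ paired against $y_1-y_2$, so it is bounded by $(|y_1|+|y_2|+|\cG(u)|)|y_1-y_2| \le C\exp(a\|u\|_X)|y_1-y_2| \le \exp(\eps\|u\|_X^2 + C(\eps))|y_1-y_2|$ using $a t \le \eps t^2 + a^2/(4\eps)$. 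Once Assumptions~\ref{asp1}(i)--(iv) hold, Theorem~\ref{t:welldb} gives absolute continuity with the stated Radon--Nikodym derivative, and Theorem~\ref{t:wellpb} gives the Hellinger bound $\dhh(\mu^y,\mu^{y'}) \le C|y-y'|$.

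The only genuine subtlety — and the step I expect to need the most care — is matching the function space: the Lipschitz estimate in the pointwise-measurement case (Corollary~\ref{c:G2}) is \emph{asymmetric}, requiring $u_1 \in L^\infty$ but $u_2 \in C^t$ with a constant that depends on $\|u_2\|_{C^t}$, whereas Assumption~\ref{asp1}(iii) as stated wants a symmetric bound in a single norm $\|\cdot\|_X$. The fix is to take $X = C^t(\TT^d)$ throughout and note that on $C^t$ one has $\|u\|_{L^\infty} \le \|u\|_{C^t}$, so the asymmetric estimate implies the symmetric one with $\|\cdot\|_X = \|\cdot\|_{C^t}$ and exponent $c_0 = 4 + (4+2d)/t + \eps$; then one still needs $s - d/2 > t$, i.e. one must choose $t$ small enough given $s$, which is possible precisely because $s > d/2$. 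In the $H^1$-functional case (Corollary~\ref{c:G1}) the estimates are already symmetric in $L^\infty$ and one could even take $X = L^\infty$, but it is cleanest to use $X = C^t(\TT^d)$ uniformly for both cases. With $X$ so chosen, all the exponential-in-$\|u\|_X$ bounds feed into the Fernique theorem wherever integrability against $\mu_0$ is needed inside the proofs of Theorems~\ref{t:welldb} and~\ref{t:wellpb}, and the proof is complete.
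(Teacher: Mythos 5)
Your proposal follows essentially the same route as the paper for the bulk of the argument: take $X=C^t(\TT^d)$ with $t\in(0,s-d/2)$ so that $\mu_0(X)=1$, verify Assumptions~\ref{asp1}(i)--(iv) from Corollaries~\ref{c:G1} and~\ref{c:G2} (including your correct observation that the asymmetric $L^\infty$/$C^t$ Lipschitz estimate of Corollary~\ref{c:G2} symmetrizes once both arguments are measured in the $C^t$ norm, and that $\exp(c\|u\|_X)\le c_1(\eps)\exp(\eps\|u\|_X^2)$ converts the exponential-linear bounds into the quadratic-exponential form required by (iv)), and then invoke Theorems~\ref{t:welldb} and~\ref{t:wellpb}. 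All of that matches the paper's verification step for step.

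The one ingredient you omit is the identification of the measure defined by the Radon--Nikodym formula with the actual conditional law $\bbP(\ud u\mid y)$, which is what the theorem asserts. Theorem~\ref{t:welldb} only tells you that \eqref{eq:radon} defines a well-normalized probability measure; it does not by itself say that this measure is the regular conditional probability of $u$ given $y$ under the joint law of $(u,y)$. The paper closes this gap by constructing the joint measure $\pi(\ud u,\ud y)=\rho(\ud y\mid u)\,\mu_0(\ud u)$ on $X\times\R^K$ with $\rho(\ud y\mid u)=\cN(\cG(u),\Gamma)$ (this uses $\mu_0$-measurability of $\cG$, which follows from its continuity on $X$ together with $\mu_0(X)=1$), computing $\ud\pi/\ud\pi_0\propto\exp(-\Phi(u;y))$ against the product reference measure $\pi_0=\mu_0\otimes\cN(0,\Gamma)$, checking positivity of the normalization via $\int_X\exp(-\Phi(u;y))\,\mu_0(\ud u)\ge\exp(-K(r))\,\mu_0(\|u\|_X<r)>0$, and then applying a conditioning lemma (Lemma 5.3 of \cite{HaiS07}, or Section 10.2 of \cite{Dud02}) to conclude that the regular conditional probability $\bbP^\pi(\ud u\mid y)$ is exactly the measure given by \eqref{eq:radon2}. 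You should add this construction (or at minimum the positivity of the normalizing constant plus the conditioning argument); with that inserted, the remainder of your proposal is in order and the Hellinger bound follows from Theorem~\ref{t:wellpb} as you state.
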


\begin{proof}
Let $X=C^{t}(D)$. 
We first define measures $\pi_0$ and $\pi$
on $X\times \R^{K}$ as follows.
Define $\pi_0(\ud u,\ud y)=\mu_0(\ud u)\otimes \rho(\ud y)$
where $\rho$ is a centred Gaussian
with covariance matrix $\Gamma$, assumed positive.
Since $\cG:X\to\R^K$ is continuous and since 
Lemma 6.25 of \cite{St10} shows that
$\mu_0(X)=1$ for any $t \in (0,s-d/2)$,
we deduce that $\cG$ is $\mu_0$ measurable
for $t \in (0,s-d/2)$. 
We define $\rho(\ud y|u)=\cN(\cG(u),\Gamma)$
and then by $\mu_0$-measurability of $\cG$,
$\pi(\ud u,\ud y)=\rho(\ud y|u)\mu_0(\ud u)$.
From the properties of Gaussian on $\R^K$
we have
\begin{align*}
\frac{\ud \pi}{\ud\pi_0}(u;y)\propto \exp(-\Phi(u;y))
\end{align*}
with $\Phi(u;y)=\frac{1}{2}|\Gamma^{-1/2}(y-\cG(u))|^2$.
We now show that Assumptions \ref{asp1} (i)--(iv) hold for 
this $\Phi$.
Assumption \ref{asp1}(i) is automatic becuase 
$$\Phi(u;y):=\frac{1}{2}
\Big|\Gamma^{-\frac12}\bigl(y-{\cal G}(u)\bigr)\Big|^2 \ge 0.$$
By Corollaries \ref{c:G1} and \ref{c:G2},
$\cG$ is bounded on bounded sets in $X$,
and Lipschitz on $X$, for any $t>0$, proving
Assumptions \ref{asp1} (ii), (iii). 
To prove Assumption \ref{asp1} (iv) note that
\begin{align*}
|\Phi(u;y_1)-\Phi(u;y_2)| &\le \frac12
\bigl|\Gamma^{-\frac12}\bigl(y_1+y_2-\cG(u)\bigr)\bigr|
\bigl|\Gamma^{-\frac12}\bigl(y_1-y_2\bigr)\bigr|\\
&\le c_1\exp\bigl(c_2\|u\|_{X}\bigr)|y_1-y_2|
\end{align*}
and
$$\exp(c\|u\|_{X}) \le c_1(\eps) \exp(\eps\|u\|_{X}^2).$$
By Assumption \ref{asp1} (ii) we deduce that
$$
\int_X \exp(-\Phi(u;y))\,\mu_0(\ud y)
\ge \exp(-K(r))\,\mu_0(\|u\|_X<r)>0.
$$
By conditioning (see Lemma 5.3 in \cite{HaiS07} or Section 10.2 in \cite{Dud02})
we deduce that the regular conditional probability 
$\mu^y(\ud u)=\mathbb{P}^\pi(\ud u|y)$ is
absolutely continuous with respect to $\mu_0$
and is given by (\ref{eq:radon2}).
Continuity in the Hellinger metric follows from Theorem \ref{t:wellpb}.
\end{proof}

In the case of a general bounded domain $D$  the prior measure 
$\mu_0\sim\cN(0,\cA^{-s})$ where $s>d-1/2$ and $\cA$ is the Laplacian operator  
acting on $D(\cA)=\{u\in H^2(D):\nabla u\cdot {\bf n}|_{\partial D}=0 \mbox{ and } \int_D u=0\}$,
satisfies $\mu_0(C^t(D))=1$. Hence the techniques of the
preceding theorem can be used to show 
wellposedness of the posterior in this case provided
$s>d-\frac12$.

\subsection{Weak approximation in a Fourier basis}
\label{ssec:f}

Let $x=(x_1,\dots,x_d)\in\mathbb{T}^d$. 
With $\ppsi_{k_1,\dots,k_d}=\e^{i(k_1x_1+\dots+k_dx_d)}$, 
the set $\{\ppsi_{k_1,\dots,k_d}\}_{k_1,\dots,k_d\in\Z}$ forms a basis for $L^2(\TT^d)$.
Define 
$$W^N=\mathrm{span}\{\ppsi_{k_1,\dots,k_d},|k_1|\le N,\dots,|k_d|\le N\}$$ 
and recall the notation established in subsection
\ref{sec:approx}.\footnote{The slightly different
interpretation of $N$ should not cause any 
confusion in what follows.} 
We let $p$ denote the $H^1-$valued random
variable found from solution of the
elliptic problem \eqref{eq:epde}
with $u$ distributed according
to the posterior $\mu^y$ and $p^N$ the
analogous random variable with $u=u^N_{F}$ distributed
according to $\nu^N$ with the Fourier truncation
described above.
We have the following approximation theorem:

\begin{theorem}\label{t:appF}
Consider the inverse problem of finding $u$ 
from noisy observations of $p$ in the form of (\ref{eq:obs}) 
and with $p$ solving (\ref{eq:epde}). 
Let Assumption \ref{asp:Gform} hold and
consider $\mu_0$ to be distributed as $\cN(0,(-\Delta)^{-s})$ with $\Delta$ the Laplacian operator acting on $H^2(\TT^d)$ Sobolev functions with zero average on $\TT^d$,
and $s>d/2$.
Then, for any $t<s-\frac{d}{2}$,
$$
\|\bbE^{\mu^y}p-\bbE^{\nu^{N}}p^N\|_{H^1}\le C\,N^{-t},
$$
and, with $\bar{p}=\bbE^{\mu^y} p$, $\bar{p}^N=\bbE^{\nu^{N}} p^N$
and $S=\mathcal{L}(H^1(\TT^d),H^1(\TT^d))$ we have
$$
\|\bbE^{\mu^y}(p-\bar{p})\otimes (p-\bar{p})
-\bbE^{\nu^{N}}(p^N-\bar{p}^N)\otimes (p^N-\bar{p}^N)\|_S
\le C\,N^{-t}.
$$
\end{theorem}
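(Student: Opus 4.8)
The plan is to verify the hypotheses of Theorem \ref{t:tapp}, so that the general weak-approximation result applies directly. I would work with the Banach spaces $X=C^t(\TT^d)$, $X'=C^{t'}(\TT^d)$ and $X''=C(\bar\TT^d)$ for a suitable ordering of exponents $0<t''<t'<t<s-d/2$, noting that with $\mu_0\sim\cN(0,(-\Delta)^{-s})$ Lemma 6.25 of \cite{St10} gives $\mu_0(C^t(\TT^d))=1$ precisely in this range. The first task is to choose $G$: for the mean estimate $G(u)=p$ is the solution map into $S=H^1(\TT^d)$, and for the covariance estimate $G(u)=(p-\bar p)\otimes(p-\bar p)$, valued in $S=\mathcal{L}(H^1,H^1)$. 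Both must be shown to satisfy the exponential-Lipschitz bound in Assumption \ref{a:approx}; the bound on $\cG$ is already supplied by Corollaries \ref{c:G1} and \ref{c:G2}.

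Next I would establish the Lipschitz properties of $G$. For the $H^1$-functional case, Proposition \ref{pr:Gbnd0}(ii) gives the bound $\|p_1-p_2\|_{H^1}\le C\exp(4\max\{\|u_1\|_{L^\infty},\|u_2\|_{L^\infty}\})\|u_1-u_2\|_{L^\infty}$, which fits the required form with the max over $\|u_1\|_X^2$ and $\|u_2\|_{X'}^2$ since $\|\cdot\|_{L^\infty}\le\|\cdot\|_X$ and $\exp(c\|u\|_X)\le C(\eps)\exp(\eps\|u\|_X^2)$. For the pointwise case one instead uses Proposition \ref{pr:GLip}, which already requires the asymmetric setup (one argument in $L^\infty$, the other in $C^t$), matching exactly the $X$/$X'$ distinction in Assumption \ref{a:approx}. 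For the covariance functional $G(u)=(p-\bar p)\otimes(p-\bar p)$ I would expand the difference of tensor products as $(p_1-\bar p)\otimes(p_1-\bar p)-(p_2-\bar p)\otimes(p_2-\bar p)=(p_1-p_2)\otimes(p_1-\bar p)+(p_2-\bar p)\otimes(p_1-p_2)$, bound the operator norm by $\|p_1-p_2\|_{H^1}(\|p_1-\bar p\|_{H^1}+\|p_2-\bar p\|_{H^1})$, and then use the $H^1$-Lipschitz bound on the first factor together with the $H^1$-boundedness of $p_i$ and $\bar p$ from Proposition \ref{pr:Gbnd0}(i); the resulting extra $\exp$ factors are absorbed as before. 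The remaining structural hypothesis is $\|P^\bot\|_{\mathcal{L}(X,X'')}=\psi(N)\to 0$ with $\|P^\bot\|_{\mathcal{L}(X,X')}$ bounded in $N$; since $P^\bot$ is Fourier truncation beyond $|k_j|\le N$, standard interpolation of the Hölder/Sobolev scales gives $\|P^\bot u\|_{C^{t''}}\le C N^{-(t-t'')}\|u\|_{C^t}$, so taking $t''$ close to $0$ and $t$ close to $s-d/2$ yields $\psi(N)\le CN^{-t}$ for any $t<s-d/2$, while $\|P^\bot\|_{\mathcal{L}(C^t,C^{t'})}\le C$.

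With all hypotheses of Theorem \ref{t:tapp} verified, its conclusion $e\le C\psi(N)$ gives $\|\bbE^{\mu^y}G(u)-\bbE^{\nu^N}G(u^N)\|_S\le CN^{-t}$, which is the first assertion when $G(u)=p$ and, after one more triangle-inequality argument, the second. The one subtlety with the covariance statement is that $\bar p^N=\bbE^{\nu^N}p^N$ is not $\bar p$, so $G$ as written for the $\nu^N$-expectation uses a shifted centring; I would handle this by writing the target error as the error for the fixed-centre functional $(p-\bar p)\otimes(p-\bar p)$ (to which Theorem \ref{t:tapp} applies verbatim) plus correction terms of the form $(\bar p-\bar p^N)\otimes(\cdots)$, and bounding $\|\bar p-\bar p^N\|_{H^1}$ by the first part of the theorem together with Fernique-controlled second moments of $\|p\|_{H^1}$ and $\|p^N\|_{H^1}$ under $\mu^y$ and $\nu^N$.

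The main obstacle I anticipate is not any single estimate but the bookkeeping of exponents: one must simultaneously keep the domain norm in $C^t$ (to stay on the full-measure space), the target norm in $C^{t'}$ (bounded projection, no rate), and the perturbation norm in $C^{t''}\hookrightarrow C$ (to extract the decay $N^{-t}$), while checking that the asymmetric Lipschitz bound of Proposition \ref{pr:GLip} — which genuinely needs $u_2\in C^t$ and only $u_1\in L^\infty$ — is compatible with both arguments ranging over $X=C^t$ in Assumption \ref{a:approx}. The integrability needed to invoke \eqref{e:generalE} and to absorb the $\exp(\eps\|u\|_X^2)$ factors is then routine via the Fernique theorem \cite{DaZa92}, since $\mu^y$ and $\nu^N$ are both absolutely continuous with respect to the Gaussian $\mu_0$ with bounded-below normalisation constants uniformly in $N$.
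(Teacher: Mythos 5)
Your overall strategy is exactly the paper's: verify Assumption \ref{a:approx} with $X=C^t$, invoke Theorem \ref{t:tapp}, and read off both estimates (the paper takes $G(u)=p\otimes p$ rather than the centred covariance, so your extra triangle-inequality step for the shifted centring is a reasonable, if slightly more laborious, variant). The genuine gap is in the one step you dismiss as routine: the operator bound $\|P^{\bot}\|_{{\cal L}(C^t,C^0)}\lesssim N^{-(t-\eps)}$. This is not a ``standard interpolation of the H\"older/Sobolev scales,'' because the Fourier partial-sum operator $P^N$ is \emph{not} uniformly bounded on $C^0(\TT^d)$ (or on any H\"older space): its norm is the Lebesgue constant, which grows like $(\log N)^d$. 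Consequently there are no bounded endpoints to interpolate between, and the clean rate $CN^{-(t-t'')}$ you assert is not what partial sums deliver. The correct statement, which is precisely what the paper spends the bulk of its proof establishing via the Dirichlet kernel $D_N$ and the splitting $I_1+I_2+I_3$, is $\|u-P^Nu\|_{L^\infty}\le C\|u\|_{C^t}N^{-t}(\log N)^d$; the logarithms are then absorbed by taking $t$ strictly below $s-d/2$. (Equivalently one could cite Lebesgue's inequality $\|f-S_Nf\|_\infty\le(1+L_N)E_N(f)$ together with Jackson's theorem, but some argument of this type is indispensable --- it is the technical heart of the theorem.)

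A secondary, smaller issue: your choice $X'=C^{t'}$ with $t'>0$ forces you to prove the additional uniform bound $\|P^{\bot}\|_{{\cal L}(C^t,C^{t'})}\le C$, which again is not free for partial sums (it needs a Bernstein-type argument on top of the $L^\infty$ estimate). The paper sidesteps this entirely by taking $X'=X''=L^\infty$, so that the decaying bound in ${\cal L}(X,X'')$ automatically supplies the required uniform bound in ${\cal L}(X,X')$. Your motivation for $X'=C^{t'}$ --- compatibility with the asymmetric Lipschitz bound of Proposition \ref{pr:GLip}, which needs one argument in $C^t$ --- is a legitimate observation that the paper glosses over, but if you keep that choice you must actually supply the uniform $C^t\to C^{t'}$ bound rather than assert it.
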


Note that the mean and covariance of the pressure field,
when conditioned by the data, may be viewed as giving
a quantification of uncertainty in the pressure 
when conditioned on data. 
The previous theorem thus estimates errors arising
in this quantification of uncertainty 
when Karhunen-Lo\`eve truncation is used to represent the
posterior measure.

\begin{proof}[of Theorem \ref{t:appF}]
We apply Theorem \ref{t:tapp} with $G(u)=p$
and $S=H^1(D)$ 
or $G(u)=p\otimes p$ and $S={\cal L}(H^1(\TT^d),H^1(\TT^d)).$
We choose $X'=X''=L^{\infty}$ and $X=C^t$ for any
$t<s-\frac{d}{2}.$ Then $X$ is continuously
embedded into $X'$ and $X'$ into $X''$ and, under the assumptions 
of the theorem, $\mu_0(X'')=\mu_0(X')=\mu_0(X)=1.$
From Proposition \ref{pr:Gbnd0}
it is strightforward to show
the required Lipschitz condition on $G$
(in either the pressure or pressure covariance
cases)
whilst the required Lipschitz condition on $\cG$
follows from Assumptions \ref{asp:Gform}. 
It remains to prove the operator norm bounds on
$P^{\bot}.$ We consider dimension $d=2$ first.
We write
\begin{align*}
u^{N}_F(x,y)
&=\sum_{-N\le k\le N}\sum_{-N\le j\le N} \hat u(j,k)\e^{i(xj+ky)}\\
&=\sum_{-N\le k\le N}\sum_{-N\le j\le N} 
    \frac{1}{4\pi^2}\int_{-\pi}^\pi\int_{-\pi}^\pi u(\zeta,\xi) 
         \e^{-i(\zeta j+\xi k)}\,\ud \zeta\,\ud \xi\, \e^{i(xj+ky)}\\
&=\frac{1}{\pi^2}\int_{-\pi}^\pi\int_{-\pi}^\pi u(\zeta,\xi)\, 
     D_N(x-\zeta)\, D_N(y-\xi)\,\ud \zeta\,\ud \xi\\
&=\frac{1}{\pi^2}\int_{-\pi}^\pi\int_{-\pi}^\pi u(x-\zeta,y-\xi)\, 
     D_N(\zeta)\, D_N(\xi)\,\ud \zeta\,\ud \xi
\end{align*}
where \cite{Zyg88}
\begin{align}\label{eq:DN}
D_N(x)=\frac{1}{2}\sum_{-N\le n\le N}\e^{ixn}=\frac{1}{2}\,\frac{\sin(N+1/2)x}{\sin(x/2)}.
\end{align}
Noting that 
$\int_{-\pi}^\pi D_N(x)\,\ud x=\pi$,
we have
\begin{align*}
u^{N}_F(x,y)&-u(x,y)
=\frac{1}{\pi^2}\int_{-\pi}^\pi\int_{-\pi}^\pi \Big( u(x-\zeta,y-\xi)-u(x,y) \Big)\, 
     D_N(\zeta)\, D_N(\xi)\,\ud \zeta\,\ud \xi\\
&=\frac{1}{\pi^2}\int_{-\pi}^\pi\int_{-\pi}^\pi \Big( u(x-\zeta,y-\xi)-u(x-\zeta,y)\\
&\qquad\qquad\qquad
+u(x-\zeta,y)-u(x,y) \Big)\, 
     D_N(\zeta)\, D_N(\xi)\,\ud \zeta\,\ud \xi\\
 &=\frac{1}{\pi^2}\int_{-\pi}^\pi\left(\int_{-\pi}^\pi \Big( u(x-\zeta,y-\xi)-u(x-\zeta,y) \Big)\, 
     D_N(\xi)\,\ud\xi\right) D_N(\zeta)\,\ud \zeta\\
&\qquad+\frac{1}{\pi^2}\int_{-\pi}^\pi \Big( u(x-\zeta,y)-u(x,y) \Big)\, 
     D_N(\zeta)\,\ud \zeta\,\int_{-\pi}^\pi D_N(\xi)\,\ud \xi.
\end{align*}
To estimate the right-hand side we set 
$f(\xi)=u(x-\zeta,y-\xi)-u(x-\zeta,y)$ for fixed $x,\zeta,y\in\TT$, and noting that $f(\xi)$ is periodic we write 
\begin{align*}
2\int_{-\pi}^\pi f(\xi)\,D_N(\xi)\,\ud\xi
&= \int_{-\pi}^\pi \frac{f(\xi)}{\sin(\xi/2)}\, \sin\left((N+1/2)\xi\right)\,\ud\xi\\
&= -\int_{-\pi-\frac{\pi}{N+1/2}}^{\pi-\frac{\pi}{N+1/2}}
            \frac{f(\xi+\frac{\pi}{N+1/2})}{\sin(\frac{\xi}{2}+\frac{\pi}{2N+1})}
                \sin\left((N+1/2)\xi\right)\,\ud\xi\\
&= -\int_{-\pi}^{\pi}\frac{f(\xi+\frac{\pi}{N+1/2})}{\sin(\frac{\xi}{2}+\frac{\pi}{2N+1})}
                \sin\left((N+1/2)\xi\right)\,\ud\xi\\
&=\frac{1}{2}\int_{-\pi}^{\pi}\left(\frac{f(\xi)}{\sin(\xi/2)}
                -\frac{f(\xi+\frac{\pi}{N+1/2})}{\sin(\frac{\xi}{2}+\frac{\pi}{2N+1})}\right)
                \sin\left((N+1/2)\xi\right)\,\ud\xi
\end{align*}
In the third line we have used 
$\int_{-\pi-\alpha}^{-\pi}g(\xi)\,\ud\xi=\int_{\pi-\alpha}^{\pi}g(\xi)\,\ud\xi$ 
for any $2\pi$ periodic function $g$ and any $\alpha\in\R$.
Let $h=\frac{2\pi}{2N+1}$. We can write
\begin{align*}
4\int_{-\pi}^\pi f(\xi)\,D_N(\xi)\,\ud\xi
&\le \int_{-\pi}^\pi \left|\frac{f(\xi)}{\sin(\xi/2)}
                -\frac{f(\xi+h)}{\sin(\frac{\xi+h}{2})}\right|\,\ud\xi\\ 
&=I_1+I_2+I_3
\end{align*}
with $I_1=\int_{0}^\pi$, $I_2=\int_{-\pi}^{-h}$ and $I_3=\int_{-h}^0$ of the integrand in the right-hand side of the above inequality.
Noting that $f(0)=0$, we have
\begin{align*}
I_1
&= \int_0^\pi \left| \frac{f(\xi)-f(\xi+h)}{\sin(\frac{\xi+h}{2})}
             +f(\xi)\left(\frac{1}{\sin(\xi/2)}-\frac{1}{\sin(\frac{\xi+h}{2})} \right)\right|\,\ud\xi\\
&\le c\,h^t\|f\|_{C^t}\int_0^\pi  \frac{1}{\xi+h}\,\ud\xi
         +\int_0^\pi\frac{\|f\|_{C^t}\,\xi^t(\sin(\xi/2+h/2)-\sin(\xi/2))}
                   {\sin(\xi/2)\,\sin(\xi/2+h/2)}\,\ud\xi\\
&\le c\,h^t\|f\|_{C^t}\log\frac{1}{h}                   
            +c\,h\int_0^\pi\frac{\|f\|_{C^t}\,\xi^t}{\xi\,(\xi+h)}\,\ud\xi\\
&\le c\,\,h^t\|f\|_{C^t}\log\frac{1}{h}
            +c\,h\|f\|_{C^t}\int_0^h\frac{1}{\xi^{1-t}\,h}
            +c\,h\|f\|_{C^t}\int_h^\pi\frac{1}{h^{1-t}\,(\xi+h)}\\
 &\le c\,h^t\|f\|_{C^t}\,\log\frac{1}{h}
         + c\,\,h^t\,\|f\|_{C^t}.  
\end{align*}
Similarly
\begin{align*}
I_2
&=\int_{-\pi}^{-h} \left| \frac{f(\xi)-f(\xi+h)}{\sin(\frac{\xi}{2})}
             -f(\xi+h)\left(\frac{1}{\sin(\frac{\xi+h}{2})}-\frac{1}{\sin(\xi/2)} \right)\right|\,\ud\xi\\
&\le c\,h^t\|f\|_{C^t}\int_{-\pi}^{-h}\frac{1}{-\xi}\,\ud\xi
        +\int_{-\pi}^{-h}\frac{\|f\|_{C^t}\,|\xi+h|^t|\sin(\xi/2)-\sin((\xi+h)/{2})|}
                   {|\sin(\xi/2)|\,|\sin((\xi+h)/2)|}\,\ud\xi\\
&\le c\,h^t\|f\|_{C^t}\log\frac{1}{h}                   
            +c\,h\int_{-\pi}^{-h}\frac{\|f\|_{C^t}\,|\xi+h|^t}{|\xi|\,|\xi+h|}\,\ud\xi\\
&\le c\,\,h^t\|f\|_{C^t}\log\frac{1}{h}
         + c\, h\,\|f\|_{C^t}\int_{-\pi}^{-2h}\frac{1}{|\xi|\,h^{1-t}}\,\ud\xi
         + c\, h\,\|f\|_{C^t}\int_{-2h}^{-h}\frac{1}{h\,|\xi+h|^{1-t}}\,\ud\xi\\
 &\le c\,h^t\|f\|_{C^t}\,\log\frac{1}{h}
         + c\,\,h^t\,\|f\|_{C^t}.              
\end{align*}
Finally
\begin{align*}
I_3
&\le c\,\int_{-h}^0 \left|\frac{f(\xi)}{\xi}\right|+\left|\frac{f(\xi+h)}{\xi+h}\right|\,\ud\xi\\
&\le c\,\|f\|_{C^t}\int_{-h}^0\frac{1}{|\xi|^{1-t}}\,\ud\xi
        +c\,\|f\|_{C^t}\int_{-h}^0\frac{1}{|\xi+h|^{1-t}}\,\ud\xi\\\
&\le c\,h^t\,\|f\|_{C^t}.
\end{align*}
Hence we have
$$
\int_{-\pi}^\pi \Big( u(x-\zeta,y-\xi)-u(x-\zeta,y) \Big)\, 
     D_N(\xi)\,\ud\xi \le C\,N^{-t}\|u\|_{C^t}\,\log N,
$$
and
$$
\int_{-\pi}^\pi \Big( u(x-\zeta,y)-u(x,y) \Big)\, 
     D_N(\zeta)\,\ud \zeta \le C\,N^{-t}\|u\|_{C^t}\,\log N.
$$
Now since for fixed $\epsilon>0$ sufficiently small,
\begin{align*}
\int_0^\pi \frac{|\sin{(N+1/2)x|}}{|\sin(x/2)|}\,\ud x
&\le\int_0^{\epsilon/N}(2N+1)\,\ud x+\int_{\epsilon/N}^\epsilon \frac{1}{x/2}\,\ud x
         +\int_\epsilon^\pi \frac{|\sin{(N+1/2)x|}}{|\sin(x/2)|}\,\ud x\\
&\le (2+\frac{1}{N})\epsilon+2\log N+C(\epsilon)\\
&\le c\,\log N\quad\mbox{ as }N\to\infty,
\end{align*}
we have $\|D_N(\xi)\|_{L^1(-\pi,\pi)}=O(\log N)$ and therefore
$$
|u^{N}_F(x,y)-u(x,y)|\le C\,\|u\|_{C^t(D)}\,N^{-t}\,(\log N)^2,
\qquad\mbox{for any }\,x,y\in\mathbb{T}^2.
$$
Similarly in the three-dimensional case one can show that
$$
\|u^{N}_F-u\|_{L^\infty(D)}\le C\,\|u\|_{C^t(D)}\,N^{-t}\,(\log N)^3.
$$
Since $t$ can be chosen arbitrarily close to $s-\frac{d}{2}$ we obtain $\|P^{\bot}\|_{{\cal L}(X,X'')}={\cal O}(N^{-t})$ for any $t<s-\frac{d}{2}$. This, since $X'=X''$, implies that $\|P^{\bot}\|_{{\cal L}(X,X')}$ is bounded independently of $N$.  The result follows by
Theorem \ref{t:tapp}.
\end{proof}

%
%

\section{Conclusion}\label{sec:conclu}
We have addressed the inverse problem of finding the diffusion coefficient
in a uniformly elliptic PDE in divergence form, when noisy observations of its solution are given, using a Bayesian approach.
We have applied the results of \cite{CDRS09} on well-definedness and well-posedness of the posterior measure to show that for an appropriate choice of prior measure 
this inverse problem is well-posed. We also provided a general theorem concerning weak
approximation of the posterior using finite
dimensional truncation of the Karhunen-Lo\`eve expansion:
Theorem \ref{t:tapp}.
We have then used the result of this theorem 
to give an estimate of the weak error in the posterior measure 
when using Fourier truncation: 
Theorem \ref{t:appF}.
Future work arising from the results in this paper
includes the possibility of application to other
inverse problems, the study of rare events and
the effect of approximation, and the question of obtaining
improved rates of weak convergence under stronger
conditions on the mapping $G$.
Also of interest is the extension to non-Gaussian
priors of Besov type \cite{Las09}.



\renewcommand{\theequation}{A-\arabic{equation}}
\setcounter{equation}{0}  

\renewcommand{\thelemma}{A-\arabic{lemma}}

\renewcommand{\thecorollary}{A-\arabic{corollary}}

\renewcommand{\thetheorem}{A-\arabic{theorem}}
\setcounter{theorem}{0}  

\section*{Appendix A}

Let $D\subset \R^d$ be open and bounded and $p\in W_0^{1,q}(D)$ satisfy the following integral identity
\begin{align}\label{eq:weak}
\int_D \nabla v\cdot (a\nabla p+e)+f v\,\ud x=0,
\end{align}
for any $v\in C^1_0(D)$.
We find an estimate for the $W^{1,q}$ norm of $p$ with special attention on how the upper bound depends on the diffusion coefficient $a$. The results of this Appendix are obtained using slight modification of the proof of Shaposhnikov \cite{Sh06} for our purpose here. 

In the following, Lemma \ref{l:locale} gives an estimate for the $W^{1,q}$ norm of
$p$ over $B$, a ball of sufficiently small radius in $\R^d$ and of centre 
$0\in\R^d$. 
Lemma \ref{l:locale2} gives a similar result for the case that 
the domain is $B\cap\{(x^1,\dots,x^d)\in\R^d:x^d\ge0\}$. 
This lemma allows us to consider the effect of the boundary when generalizing
to a bounded domain $D$.
Lemma \ref{l:locale} and \ref{l:locale2} are then used to prove Theorem \ref{t:est}
which gives an estimate for $\|p\|_{W^{1,q}(D)}$ in a general bounded domain $D$.
Finally in Corollary \ref{c:est} we consider $a\in C^t(D)$ and obtain an estimate
for $\|p\|_{W^{1,q}}$ with a polynomial dependence on $\|a\|_{C^t(D)}$.

{\bf Notation:} In this appendix we use the operators $P_r$ and $Q_r$ which for $u$ a vector valued function and $g$ a scalar function are defined as
$$
P_r(g)(y)=\int_{B(0,r)} K(x-y)\,g(x)\,\ud x\,\qquad 
Q_r(u)(y)=\int_{B(0,r)} \nabla K(x-y)\cdot u(x)\,\ud x,
$$
where
$$
K(y)=\left\{
\begin{array}{ll}
\frac{-1}{d(d-1)\alpha(d)|y|^{d-2}},& d>2,\\
\frac{1}{2\pi}\ln|y|,& d=2.
\end{array}
\right.
$$
Also for any $r>0$ we define
$$B(0,r)=\{x\in\R^d:|x|<r\}.$$

\begin{lemma}\label{l:locale}
Let $a\in C(B(0,1))$ and $a(0)=a_0$.
There exists $r<1$ such that if
$p\in W^{1,q}_0(B(0,r))$ satisfies (\ref{eq:weak}) with $D\equiv B(0,r)$,
$P_rf\in W^{1,q}(B(0,r))$, $e\in L^q(B(0,r))$,
and $\mathrm{supp}\ p,\,\mathrm{supp}\ f,\,\mathrm{supp}\ e\in B(0,r)$,
then, letting $B=B(0,r)$,
\begin{equation}\label{eq:locale}
\|p\|_{W^{1,q}(B)}\le \frac{1}{\lambda}\frac{C_0(d,q)}{C_1}(1+r)\,\big(\|e\|_{L^q(B)}+\|P_r(f)\|_{W^{1,q}(B)}\big),
\end{equation}
with 
$$
C_1=1-\frac{(1+r)C_0(d,q)}{\lambda}\sup_{B}|a-a_0|>0.
$$
\end{lemma}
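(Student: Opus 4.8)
The plan is to follow Shaposhnikov's argument and recast the integral identity \eqref{eq:weak} as a fixed-point equation for $\nabla p$ in $L^q(B)$. First I would write the identity with the diffusion coefficient split as $a = a_0 + (a - a_0)$, so that
\begin{align*}
\int_B \nabla v \cdot \bigl(a_0 \nabla p + (a-a_0)\nabla p + e\bigr) + f v \, \ud x = 0
\qquad \text{for all } v \in C^1_0(B).
\end{align*}
Dividing by $a_0$ and absorbing constants, this exhibits $p$ as the solution of a \emph{constant-coefficient} problem $-a_0 \Delta p = \cdiv\bigl((a-a_0)\nabla p + e\bigr) + f$ on $B$ with zero boundary data (using $\mathrm{supp}\, p \subset B$). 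The Newtonian potential representation then gives $\nabla p = -\tfrac{1}{a_0}\bigl(Q_r((a-a_0)\nabla p) + Q_r(e) + \nabla P_r(f)\bigr)$ pointwise a.e., where $P_r, Q_r$ are the operators defined in the Notation paragraph; here one uses that $Q_r = \nabla\cdot(\text{Newtonian potential})$ is a singular integral operator of Calderón--Zygmund type.

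The key ingredient is the $L^q\to L^q$ bound $\|Q_r(u)\|_{L^q(B)} \le C_0(d,q)\,\|u\|_{L^q(B)}$ for the singular integral $Q_r$, together with the elementary bound $\|Q_r(e)\|_{L^q(B)} \le C_0(d,q)(1+r)\|e\|_{L^q(B)}$ coming from the $L^1$-mass of the kernel $\nabla K$ on $B(0,r)$ (this is where the factor $(1+r)$ enters). Granting these, apply $\|\cdot\|_{L^q(B)}$ to the representation formula:
\begin{align*}
\|\nabla p\|_{L^q(B)}
&\le \frac{C_0(d,q)}{a_0}(1+r)\Bigl(\|(a-a_0)\nabla p\|_{L^q(B)} + \|e\|_{L^q(B)} + \|P_r(f)\|_{W^{1,q}(B)}\Bigr)\\
&\le \frac{C_0(d,q)(1+r)}{\lambda}\Bigl(\sup_B|a-a_0|\,\|\nabla p\|_{L^q(B)} + \|e\|_{L^q(B)} + \|P_r(f)\|_{W^{1,q}(B)}\Bigr),
\end{align*}
where I have used $a_0 = \e^{u(0)} \ge \lambda$. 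Now \emph{choose} $r < 1$ small enough that, by continuity of $a$ at $0$, the contraction constant $C_1 := 1 - \tfrac{(1+r)C_0(d,q)}{\lambda}\sup_B|a-a_0|$ is strictly positive; this is possible since $\sup_{B(0,r)}|a - a_0| \to 0$ as $r \to 0$. Absorbing the $\|\nabla p\|_{L^q(B)}$ term from the right then yields $\|\nabla p\|_{L^q(B)} \le \tfrac{1}{\lambda}\tfrac{C_0(d,q)}{C_1}(1+r)(\|e\|_{L^q(B)} + \|P_r(f)\|_{W^{1,q}(B)})$, and the Poincar\'e inequality on $B$ (valid since $p \in W^{1,q}_0(B)$) controls $\|p\|_{L^q(B)}$ by $\|\nabla p\|_{L^q(B)}$, giving the full $W^{1,q}(B)$ estimate \eqref{eq:locale}.

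The main obstacle is the Calderón--Zygmund bound $\|Q_r\|_{\mathcal{L}(L^q)} \le C_0(d,q)$ with a constant \emph{uniform in $r \le 1$} and \emph{independent of the coefficient}, since this is what makes the smallness-of-$r$ mechanism work; I would invoke the standard singular integral theory (as in Shaposhnikov \cite{Sh06}) rather than reprove it, being careful that the cutoff to $B(0,r)$ does not degrade the constant. A secondary point requiring care is the precise handling of the $f$-term: one must verify that $\nabla P_r(f)$ is the relevant quantity and that $\|\nabla P_r(f)\|_{L^q(B)} \le \|P_r(f)\|_{W^{1,q}(B)}$, so that the right-hand side of \eqref{eq:locale} is as stated; the hypothesis $P_r f \in W^{1,q}(B)$ is imposed precisely so this term is finite.
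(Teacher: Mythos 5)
Your argument is correct and is essentially the same freezing-of-coefficients proof that the paper itself does not write out but delegates to Lemmas 2 and 3 of Shaposhnikov \cite{Sh06}: represent $p$ via the Newtonian potential for the constant-coefficient operator $a_0\Delta$, use the Calder\'on--Zygmund $L^q$ bound to absorb the $(a-a_0)\nabla p$ term for $r$ small (which is exactly where the constant $C_1>0$ comes from), and finish with Poincar\'e. The only imprecision is notational: the singular integral that needs the uniform $L^q\to L^q$ bound is $\nabla Q_r$ (second derivatives of the Newtonian potential), not $Q_r$ itself, whose kernel $\nabla K$ is locally integrable; this does not affect the structure of your estimate.
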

%
\begin{lemma}\label{l:locale2}
Let $G(0,r)=B(0,r)\cap \{(x^1,\cdots,x^d):x^d\ge 0\}$ for $r>0$.
Suppose that in the assumptions of Lemma (\ref{l:locale}), $B(0,1)$ and $B(0,r)$ are replaced with $G(0,1)$ and $G(0,r)$. Assume that the functions $p,f$ and $e$ vanish
in a neighborhood of the spherical part of the hemisphere $G(0,r)$ but not
necessarily on $G(0,r)\cap\{x^d=0\}$. Then the result of Lemma \ref{l:locale} holds.
\end{lemma}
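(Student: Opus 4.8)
The plan is to reduce the half-ball estimate to the full-ball estimate of Lemma~\ref{l:locale} by reflection across the hyperplane $\{x^d=0\}$. Let $\sigma$ denote the reflection $(x^1,\dots,x^{d-1},x^d)\mapsto(x^1,\dots,x^{d-1},-x^d)$. Given data $(a,e,f,p)$ on $G(0,r)$, I extend them to $B(0,r)$ by taking the even reflection of $p$, of $a$, of $f$, and of the tangential components $e^1,\dots,e^{d-1}$, and the odd reflection of the normal component $e^d$; thus each extension agrees with the original on $G(0,r)$, and for $x^d<0$ one sets $\tilde p(x)=p(\sigma x)$, $\tilde a(x)=a(\sigma x)$, $\tilde f(x)=f(\sigma x)$, $\tilde e^i(x)=e^i(\sigma x)$ for $i<d$, and $\tilde e^d(x)=-e^d(\sigma x)$. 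Since $p,f,e$ vanish near the spherical part of $G(0,r)$, the extensions vanish near $\partial B(0,r)$, so the support requirements of Lemma~\ref{l:locale} hold; and since $\tilde a$ is continuous with $\tilde a(0)=a_0$, has essential infimum $\lambda$ on $B(0,r)$, and satisfies $\sup_{B(0,r)}|\tilde a-a_0|=\sup_{G(0,r)}|a-a_0|$, the quantities $\lambda$, $C_0(d,q)$, $C_1$ appearing in \eqref{eq:locale} are unchanged.

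The main point is to verify that $\tilde p$ satisfies the weak identity \eqref{eq:weak} on $D\equiv B(0,r)$ with data $\tilde a,\tilde e,\tilde f$. On each open half-ball this holds by construction, since with the parities above $\sigma$ is a symmetry of the divergence-form operator together with its source. The only possible obstruction to \eqref{eq:weak} holding across $\{x^d=0\}$ is a distribution supported on that hyperplane, and an integration by parts identifies its density as the jump across $\{x^d=0\}$ of the normal flux $\tilde a\,\partial_d\tilde p+\tilde e^d$. Here one uses that the weak formulation on $G(0,r)$, with test functions required to vanish only near the spherical part, imposes the natural boundary condition $a\,\partial_d p+e^d=0$ on $G(0,r)\cap\{x^d=0\}$: this makes the upper trace of $\tilde a\,\partial_d\tilde p+\tilde e^d$ vanish, while the parities ($\partial_d\tilde p$ odd, $\tilde a$ even, $\tilde e^d$ odd) force the lower trace to equal minus the upper one, hence also zero. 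So the jump vanishes, no hyperplane term appears, and Lemma~\ref{l:locale} applies to $(\tilde a,\tilde e,\tilde f,\tilde p)$ and yields \eqref{eq:locale} for $\tilde p$ on $B(0,r)$.

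It remains to descend from $B(0,r)$ to $G(0,r)$. Even and odd reflections multiply $L^q$ and $W^{1,q}$ norms by $2^{1/q}$, and this factor cancels between the two sides of \eqref{eq:locale}, so no constant is lost. The one quantity that is not purely local is $\|P_r\tilde f\|_{W^{1,q}(B(0,r))}$; since $K$ is radial one has $K(\sigma x-y)=K(x-\sigma y)$, hence $P_r\tilde f(y)=P_rf(y)+(P_rf)(\sigma y)$, so it suffices to control $\|P_rf\|_{W^{1,q}}$ on the lower half-ball, which follows from interior elliptic estimates for the harmonic function $P_rf$ there (the sources lie in $G(0,r)$), after a harmless shrinking of $r$. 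Combining these bounds gives \eqref{eq:locale} for $p$ on $G(0,r)$ with a constant of the same form, which is the assertion of the lemma.

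The step I expect to cost the most effort is pinning down the natural boundary condition on the flat face and checking the parities: one must be explicit about the class of test functions used in \eqref{eq:weak} on $G(0,r)$, confirm that it yields exactly $a\,\partial_d p+e^d=0$ there, and verify that the even/odd prescription for the components of $e$ (odd for $e^d$, even for the rest, as well as for $a$, $f$ and $p$) is precisely the one that simultaneously makes the reflected equation hold on the lower half-ball and keeps the normal flux continuous across $\{x^d=0\}$. The bound on $\|P_r\tilde f\|_{W^{1,q}}$ is a secondary, essentially bookkeeping, issue.
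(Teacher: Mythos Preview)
The paper itself does not prove this lemma; it simply records that both Lemma~\ref{l:locale} and Lemma~\ref{l:locale2} ``follow from the proofs of Lemma~2 and~3 of~\cite{Sh06}''. Your reflection strategy is the standard route to such half-ball estimates, so in spirit you are on the right track. The gap is in the choice of parities.

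You reflect $p$ evenly and justify the resulting full-ball weak identity by invoking a natural boundary condition $a\,\partial_d p+e^d=0$ on the flat face. But the hypotheses do not deliver this: when ``$B(0,r)$ is replaced with $G(0,r)$'' in \eqref{eq:weak}, the test functions become $v\in C^1_0(G(0,r))$ and therefore vanish on the flat face as well, so no flux condition is imposed there. Moreover, the sole use of the lemma, in Theorem~\ref{t:est}, is for the Dirichlet problem $p\in W^{1,q}_0(D)$: after localization and flattening, the function $w_j=\xi_j p$ vanishes on the flat part of $\partial G(0,r)$ because that part is the image of a portion of $\partial D$. The clause ``not necessarily on $G(0,r)\cap\{x^d=0\}$'' in the statement is most naturally read as referring to $f$ and $e$ (which indeed need not vanish there in the application); for $p$ the Dirichlet trace should be regarded as inherited from $W^{1,q}_0$. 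With your even reflection the surface term $\int_{\{x^d=0\}}(a\,\partial_d p+e^d)\,w$ does not cancel, and the reflected weak identity on $B(0,r)$ fails. You correctly flagged this as the delicate step, but your guess about which boundary condition is in force is the wrong one.

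The fix is to switch parities: take the \emph{odd} reflection of $p$, of $f$, and of the tangential components $e^1,\dots,e^{d-1}$, and the even reflection of $a$ and of $e^d$. Then $\tilde p\in W^{1,q}_0(B(0,r))$ precisely because $p|_{\{x^d=0\}}=0$, and for any $v\in C^1_0(B(0,r))$ the full-ball pairing reduces, after the change of variables $x\mapsto\sigma x$ on the lower half, to the half-ball identity with test function $v-v\circ\sigma$. This function is odd in $x^d$, hence vanishes on the flat face, and so its restriction lies in $C^1_0(G(0,r))$; the half-ball hypothesis then applies directly, with no extra boundary assumption needed. The remainder of your argument (matching of $\lambda$, $C_1$, norms under reflection, and the handling of $P_r\tilde f$) goes through with the obvious sign changes.
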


The above results follow from the proofs of Lemma 2 and 3 of \cite{Sh06}.
%
%
\begin{theorem}\label{t:est}
Assume that $D\subset\R^d$, $d=2,3$, is a bounded $C^1$ domain, and  $a\in C(\overline{D})$ with
$$
0<\lambda\le a<\Lambda<\infty.
$$
Suppose also that $e\in L^q(D)$ and $f\in L^2(D)$.
Then for $2<q<6$
$$
\|p\|_{W^{1,q}}\le \frac{C(D,d,q)}{\lambda}\Big(1+\frac{1}{\delta^{1+d}}\Big) (1+\delta)(1+\Lambda)(\|e\|_{L^q(D)}+\|f\|_{L^2(D)}),
$$
where $\delta$ is a positive constant that satisfies the following:
$$
\max_{y\in B(x,\delta)}|a(y)-a(x)|\le \frac{\lambda}{4C_0(d,q)},\quad\mbox{for any } x\in D.
$$
where $C_0(d,q)$ is the constant in Lemma \ref{l:locale}. 
\end{theorem}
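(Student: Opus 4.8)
The plan is to derive the global $W^{1,q}$ bound by a covering argument that patches together the local estimates of Lemma A-1 (interior balls) and Lemma A-2 (boundary hemispheres), following Shaposhnikov. First I would fix $\delta>0$ as in the statement, so that the oscillation of $a$ on any ball $B(x,\delta)$ is at most $\lambda/(4C_0(d,q))$; this is exactly the smallness condition needed to make the constant $C_1$ in Lemma A-1 satisfy $C_1\ge \tfrac34$, hence uniformly bounded below, on every patch. By compactness of $\overline D$ and $C^1$-regularity of $\partial D$ I would then choose a finite cover of $\overline D$ by balls $B(x_i,\delta/2)$ (with $x_i$ either interior or, near $\partial D$, boundary points where after a $C^1$ change of coordinates $D$ locally looks like a hemisphere $G(0,\delta)$), together with a subordinate partition of unity $\{\chi_i\}$ with $|\nabla\chi_i|\lesssim \delta^{-1}$.

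Next I would localize: for each $i$ the function $p_i=\chi_i p$ satisfies an integral identity of the form \eqref{eq:weak} on the patch, with diffusion coefficient $a$ (frozen reference value $a(x_i)$), with a new lower-order field $e_i$ built from $e\chi_i$, $p\nabla\chi_i$ and $a p\nabla\chi_i$, and with a new source $f_i$ involving $f\chi_i$ and $\nabla a\cdot(\cdot)$–type terms absorbed into $e_i$; the key point is that $\|e_i\|_{L^q}$ and $\|P_r f_i\|_{W^{1,q}}$ are controlled by $\|e\|_{L^q(D)}+\|f\|_{L^2(D)}+\delta^{-1}\|p\|_{L^q(D)}$ up to constants depending on $D,d,q,\Lambda$. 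Applying Lemma A-1 on interior patches and Lemma A-2 on boundary patches gives
\begin{align*}
\|p_i\|_{W^{1,q}(B(x_i,\delta))}\le \frac{C(D,d,q)}{\lambda}(1+\delta)(1+\Lambda)\bigl(\|e\|_{L^q(D)}+\|f\|_{L^2(D)}+\tfrac{1}{\delta}\|p\|_{L^q(D)}\bigr).
\end{align*}
Summing over the finitely many patches (the number of which is $\lesssim |D|\,\delta^{-d}$, which is the source of the $\delta^{-d}$, and together with the extra $\delta^{-1}$ from $\nabla\chi_i$ yields the stated $(1+\delta^{-1-d})$ factor) and using $|p|\le \sum_i|\chi_i p|$ produces the bound with $\|p\|_{W^{1,q}}$ on the left and $\|p\|_{L^q(D)}$ on the right.

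Finally I would remove the $\|p\|_{L^q(D)}$ term on the right-hand side. Here I would use that for $2<q<6$ and $d\le 3$ the Sobolev embedding and interpolation give, for any $\theta>0$, $\|p\|_{L^q(D)}\le \theta\|\nabla p\|_{L^q(D)}+C_\theta\|p\|_{L^2(D)}$, while the basic energy estimate for \eqref{eq:weak} (testing with $p$ itself, using $a\ge\lambda$) bounds $\|p\|_{H^1_0(D)}$, hence $\|p\|_{L^2(D)}$, by $\lambda^{-1}(\|e\|_{L^2(D)}+\|f\|_{L^2(D)})\le C\lambda^{-1}(\|e\|_{L^q(D)}+\|f\|_{L^2(D)})$. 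Choosing $\theta$ small relative to the constant multiplying $\|\nabla p\|_{L^q}$ lets me absorb that term into the left-hand side, leaving precisely the claimed estimate. The main obstacle I anticipate is bookkeeping in the boundary patches: one must check that the $C^1$-diffeomorphism flattening $\partial D$ preserves the structure of \eqref{eq:weak} and the continuity/oscillation control on $a$, and that Lemma A-2's hypothesis (vanishing near the spherical part but not on the flat part) is met by $\chi_i p$ — this, rather than the interior estimate, is where the $C^1$-regularity of $\partial D$ and the dimensional restriction $d\le 3$ (via $q<6$) are really used, and where care is needed to keep all constants independent of $\lambda$ except through the explicit prefactor.
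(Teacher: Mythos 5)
Your proposal follows essentially the same route as the paper: cover $\overline D$ by $O(\delta^{-d})$ patches (interior balls, boundary hemispheres after a $C^1$ flattening), localize with a partition of unity whose gradients contribute the extra $\delta^{-1}$, apply Lemmas \ref{l:locale} and \ref{l:locale2} on each patch with the oscillation condition guaranteeing the lower bound on $C_1$, and sum. The localization terms you describe also match the paper's $\tilde e=\xi_j e-a\,p\,\nabla\xi_j$ and $\tilde f=\xi_j f+a\nabla\xi_j\cdot\nabla p$ (note there are no genuine $\nabla a$ terms, since $a$ is only continuous; the commutator only produces $\nabla\xi_j$ terms), and the paper controls $\|P_r(\tilde f)\|_{W^{1,q}}$ via $\|P_r g\|_{H^2}\le C\|g\|_{L^2}$ and the embedding $H^2\hookrightarrow W^{1,q}$ for $q\le 6$, $d\le 3$ --- this, rather than the boundary patches, is where $q<6$ really enters.

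The one step where you deviate is the removal of the lower-order term, and as written it does not deliver the stated constant. You propose $\|p\|_{L^q}\le\theta\|\nabla p\|_{L^q}+C_\theta\|p\|_{L^2}$ and absorption of $\theta\|\nabla p\|_{L^q}$ into the left-hand side. But the coefficient multiplying $\|\nabla p\|_{L^q}$ on the right after summation is of order $\lambda^{-1}(1+\delta^{-1-d})(1+\Lambda)$, so $\theta$ must be chosen small depending on $\lambda$, $\Lambda$ and $\delta$; then $C_\theta$ (which blows up as $\theta\to 0$) contaminates the final bound with unquantified extra powers of $\lambda^{-1}$, $\Lambda$ and $\delta^{-1}$. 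Since the entire point of Theorem \ref{t:est} is the explicit polynomial dependence of the constant on these quantities (it is what makes Corollary \ref{c:est} and ultimately Proposition \ref{pr:GLip} work), this would be a genuine loss. The fix is immediate and is what the paper does: for $d\le 3$ and $2<q<6\le 2^\ast$ one has directly $\|p\|_{L^q}\le C(D)\|\nabla p\|_{L^2}$, and the energy estimate (testing \eqref{eq:weak} with $p$) bounds $\|\nabla p\|_{L^2}$ by $\lambda^{-1}(\|e\|_{L^2}+\|f\|_{L^2})$; no interpolation or absorption is needed, and the same energy bound also handles the $\|\nabla p\|_{L^2}$ contribution coming from $a\nabla\xi_j\cdot\nabla p$ in $\tilde f$, which your sketch leaves implicit.
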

\begin{proof} 
Choose $r_D$ so that for any $x\in D$ there exists a ball
of radius $r_D$ inside $D$ that contains $x$.
Let $r=\min\{r_D,\delta\}$. Corresponding to $r$,
consider $\{x_j\}_{j=1}^J\subset\overline D$ so that the set of 
the neighborhoods of these points, $\{U(x_j)\}_{j=1}^J$ defined as follows, forms a cover of 
$\overline D$:
\begin{itemize}
\item[-] for $x_j\in D$, $U(x_j)=B(x_j,r)$, 

\item[-] for $x_j\in\partial D$, $U(x_j)=B(x_j,r)\cap\overline{D}$ 
and there exists $C^1$ mapping $\psi_j$ such that
$\psi_j(U(x_j))=G(x_j,r)$ where $G(x_j,r)=B(x_j,r)\cap \{(x^1,\cdots,x^d):x^d\ge x_j^d\}$,

\item[-] and there exists a partition of unity $\{\xi_j\}_{j=1}^J$ subordinate to $\{U(x_j)\}$.
\end{itemize}
Let $w_j=\xi_j p$. Hence $p=\sum_{j=1}^J w_j$. Define
$$
\tilde{e}=\xi_j e-a\,p\,\nabla\xi_j 
\quad\mbox{and}
\quad\tilde{f}=\xi_j f+a\nabla \xi_j\cdot\nabla p.
$$
On $U(x_j)$ with $x_j\in D$, $w_j$ satisfies
$$
\int_{B(x_j,r)}\nabla v\cdot (a\nabla w_j+\tilde e)+v\tilde f\,\ud x=0.
$$
By Lemma \ref{l:locale}, with $B_j=B(x_j,r)$, we have
$$
\|w_j\|_{W^{1,q}(B_j)}\le \frac{C(d,q)}{\lambda} (1+r) \big(\|\tilde e\|_{L^q(B_j)}+\|P_r(\tilde f)\|_{W^{1,q}(B_j)}\big).
$$
To estimate $\|\tilde e\|_{L^q(B_j)}$ we write
\begin{align*}
\|\tilde e\|_{L^q(B_j)}
&\le \|\xi_j\,e\|_{L^q(B_j)}+\|a\nabla \xi_j\, p\|_{L^q(B_j)}\\
&\le (1+\Lambda)C(\xi_j,\nabla \xi_j)\,(\|e\|_{L^q}+\|p\|_{L^q)}\\
&\le \frac{c(1+\Lambda)}{r}\,(\|e\|_{L^q}+\|p\|_{L^q}).
\end{align*}
For $\|P_r(\tilde f)\|_{W^{1,q}(B_j)}$, by Sobolev embedding theorem (assuming that $2\le q\le 6$) and since by Theorem 9.9 of \cite{GT83} $\|P_r g\|_{H^2}\le C(d)\|g\|_{L^2}$, we have
\begin{align*}
\|P_r(\tilde f)\|_{W^{1,q}(D)}
&\le\, C(D)\|P_r(\tilde f)\|_{H^2}\\
&\le\, C(D,d)\|\tilde f \|_{L^2}\\
&\le\, C(D,d)(\|\xi_j f\|_{L^2}+\|a\nabla \xi_j\cdot\nabla p\|_{L^2})\\
&\le\, C(D, d,\xi_j,\nabla\xi_j)(1+\Lambda)(\|f\|_{L^2}+\|\nabla p\|_{L^2})\\
&\le\, \frac{C(D, d)(1+\Lambda)}{r}(\|f\|_{L^2}+\|\nabla p\|_{L^2}).
\end{align*}
Since $2<q<6$ and $D$ is bounded, 
$$
\|p\|_{L^q}\le c\,\|\nabla p\|_{L^2}\le c\,(\|f\|_{L^2}+\|e\|_{L^2})
$$
where the second inequality is obtained by taking the inner product of 
(\ref{eq:weak}) with $p$ (noting that $p\in W_0^{1,q}(D)$ and hence $p|_{\partial D}=0$ in the trace sense). Therefore
\begin{align}\label{eq:wj}
\|w_j\|_{W^{1,q}(B_j)}
\le \frac{C(D,d,q)}{r\,\lambda}(1+\Lambda)(1+r)(\|e\|_{L^q}+\|f\|_{L^2}).
\end{align}
It remains to consider the case that $x_j\in \partial D$. For such $x_j$, 
$w_j$ on $U(x_j)$ satisfies (using the map $\psi_j$ defined at the beginning of the proof)
$$
\int_{G(x_j,r)}\nabla v\cdot (\hat a\nabla w_j+\hat e)+v\hat f\,\ud x=0.
$$
where $\hat{a},\hat{e}$ and $\hat{f}$ depend on $\nabla\psi_j$. It is not difficult to see that $|\nabla \psi_j|<C$ where $C$ only depends on the properties of the boundary of $D$, therefore in a similar way to the above argument and using Lemma \ref{l:locale2} it can be shown that
\begin{align*}
\|w_j\|_{W^{1,q}(G_j)}
\le \frac{C(D,d,q)}{r\,\lambda}(1+\Lambda)(1+r)(\|e\|_{L^q}+\|f\|_{L^2}).
\end{align*}
Now we can write
\begin{align*}
\|p\|_{W^{1,q}}
&\le \sum_{j=1}^J\|w_j\|_{W^{1,q}}\le \frac{c\,|D|}{r^d}\;\|w_j\|_{W^{1,q}}\\
&\le \frac{C(D,d,q)}{r^{1+d}\,\lambda}(1+\Lambda)(1+r)(\|e\|_{L^q}+\|f\|_{L^2})\\
&\le \frac{C(D,d,q)}{\lambda}\Big( 1+\frac{1}{\delta^{1+d}} \Big)\,\Big( 1+\frac{1}{r_D^{1+d}} \Big)
           \,(1+\Lambda)(1+r_D)(1+\delta)(\|e\|_{L^q}+\|f\|_{L^2})\\
&\le \frac{C(D,d,q)}{\lambda}\Big( 1+\frac{1}{\delta^{1+d}} \Big)
           \,(1+\Lambda)(1+\delta)(\|e\|_{L^q}+\|f\|_{L^2})
\end{align*}
and the result follows.
\end{proof}


In order to quantify $\delta$ in the above theorem, 
in terms of the norm of the space that $a$ lives in, 
we need to assume $a$ to be H\"older continuous:

\begin{corollary}\label{c:est}
Suppose that the assumptions of Theorem \ref{t:est} holds.
Assume also that $a$ is $t$-H\"older continuous in $D$.
Then 
$$
\|p\|_{W^{1,q}}\le \frac{C(D,d,q,t)}{\lambda}\Big(1+\frac{\|a\|_{C^t(D)}^{(1+d)/t}}{\lambda^{(1+d)/t}}\Big)
\Big(1+\frac{\|a\|_{C^t(D)}^{1/t}}{\lambda^{1/t}}\Big)(1+\Lambda)(\|e\|_{L^q(D)}+\|f\|_{L^2(D)}).
$$
\end{corollary}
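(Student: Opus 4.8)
The plan is to deduce Corollary \ref{c:est} from Theorem \ref{t:est} by choosing a quantitatively explicit admissible value of $\delta$ and simplifying the resulting bound. The quantity $\delta$ in Theorem \ref{t:est} is only required to satisfy $\max_{y\in B(x,\delta)}|a(y)-a(x)|\le \lambda/(4C_0(d,q))$ for every $x\in D$; under the extra hypothesis that $a\in C^t(D)$ we have, for all $x\in D$ and $y\in B(x,\delta)$,
\begin{align*}
|a(y)-a(x)|\le \|a\|_{C^t(D)}\,|y-x|^t\le \|a\|_{C^t(D)}\,\delta^t .
\end{align*}
Hence it suffices to pick $\delta$ with $\|a\|_{C^t(D)}\,\delta^t\le \lambda/(4C_0(d,q))$, i.e. any
\begin{align*}
\delta \le \Bigl(\frac{\lambda}{4C_0(d,q)\,\|a\|_{C^t(D)}}\Bigr)^{1/t}=:\delta_\star .
\end{align*}
First I would take $\delta=\min\{\delta_\star,\delta_0\}$ for some fixed reference scale $\delta_0$ depending only on $D$ (needed because $\delta$ in the theorem is implicitly also constrained by the geometry through $r=\min\{r_D,\delta\}$, and because we want $\delta$ bounded above); this keeps $1+\delta$ bounded by a constant $C(D)$, so the factor $(1+\delta)$ in Theorem \ref{t:est} is harmless.

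Next I would substitute this choice into the bound of Theorem \ref{t:est}. The only nontrivial factor is $1+\delta^{-(1+d)}$. Using $\delta\ge c(D)\,\delta_\star$ (valid once $\delta_\star$ is small; for $\delta_\star$ large we instead have $\delta=\delta_0$ and the factor is just a constant), we get
\begin{align*}
1+\frac{1}{\delta^{1+d}}
\le C(D)\Bigl(1+\frac{1}{\delta_\star^{1+d}}\Bigr)
= C(D)\Bigl(1+\bigl(4C_0(d,q)\bigr)^{(1+d)/t}\,\frac{\|a\|_{C^t(D)}^{(1+d)/t}}{\lambda^{(1+d)/t}}\Bigr)
\le C(D,d,q,t)\Bigl(1+\frac{\|a\|_{C^t(D)}^{(1+d)/t}}{\lambda^{(1+d)/t}}\Bigr).
\end{align*}
Feeding this into $\|p\|_{W^{1,q}}\le \frac{C(D,d,q)}{\lambda}\bigl(1+\delta^{-(1+d)}\bigr)(1+\delta)(1+\Lambda)(\|e\|_{L^q}+\|f\|_{L^2})$ and absorbing constants yields a bound of the form
\begin{align*}
\|p\|_{W^{1,q}}\le \frac{C(D,d,q,t)}{\lambda}\Bigl(1+\frac{\|a\|_{C^t(D)}^{(1+d)/t}}{\lambda^{(1+d)/t}}\Bigr)(1+\Lambda)(\|e\|_{L^q(D)}+\|f\|_{L^2(D)}),
\end{align*}
which is already slightly stronger than what is claimed: the stated corollary carries an extra factor $\bigl(1+\|a\|_{C^t(D)}^{1/t}/\lambda^{1/t}\bigr)\ge 1$, so the asserted inequality follows a fortiori. (That extra factor is presumably a convenient but non-sharp artifact of bookkeeping the two $\Lambda_m/\lambda_m$-type powers separately as in the proof of Proposition \ref{pr:GLip}; one can equally just keep it, since $1+\delta_\star^{-1}$ manifestly contributes such a term.)

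I do not expect a serious obstacle here — the content is entirely in Theorem \ref{t:est}, and the corollary is a matter of making $\delta$ explicit via Hölder continuity and then collecting powers of $\|a\|_{C^t(D)}/\lambda$. The one point requiring a little care is the interplay between the geometric upper constraint on $\delta$ (through $r=\min\{r_D,\delta\}$ in the proof of Theorem \ref{t:est}) and the analytic lower bound $\delta\le\delta_\star$: one must check that choosing $\delta=\min\{\delta_\star,\delta_0\}$ with $\delta_0$ depending only on $D$ is consistent with the statement of Theorem \ref{t:est} and that the case distinction $\delta_\star\lessgtr\delta_0$ does not introduce any $\Lambda$- or $\lambda$-dependence beyond what is claimed; both are routine since all geometric constants are allowed to depend on $D,d,q,t$.
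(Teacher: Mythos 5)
Your proposal is correct and follows essentially the same route as the paper: the paper's own proof simply notes that Hölder continuity forces the admissible $\delta$ of Theorem \ref{t:est} to be comparable to $c\,\lambda^{1/t}\|a\|_{C^t(D)}^{-1/t}$ and substitutes this into the theorem's bound. Your additional observations — that one should cap $\delta$ by a geometric scale $\delta_0(D)$, and that the factor $\bigl(1+\|a\|_{C^t(D)}^{1/t}/\lambda^{1/t}\bigr)$ is a harmless overestimate (it is what gets used downstream in Proposition \ref{pr:GLip}) — are accurate refinements of the same argument.
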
\vskip.5cm

\begin{proof}
Since $|a(x)-a(y)|\le \|a\|_{C^t}|x-y|^t$, $\delta$ of Theorem \ref{t:est} satisfies
$\delta\le c\lambda^{1/t}\|a\|_{C^t(D)}^{-1/t}$
and the result follows. 
\end{proof}


\section*{Acknowledgements}

The authors are grateful to Jose Rodrigo and Christoph Schwab for helpful discussions.
AMS is grateful to the EPSRC (UK) and ERC for financial support.

\bibliographystyle{99}

\end{document}